\definecolor{darkblue}{RGB}{0,0,160}
\definecolor{darkgreen}{RGB}{0,80,0}
\setlist[enumerate]{itemsep=5pt, labelindent=0pt, leftmargin=*, topsep=5pt, itemindent=\parindent}
\newtheorem{numerable}{Numerable}
\newtheorem{lemma}[numerable]{Lemma}
\newtheorem{proposition}[numerable]{Proposition}
\newtheorem{theorem}[numerable]{Theorem}
\newtheorem{corollary}[numerable]{Corollary}
\numberwithin{numerable}{section}
\theoremstyle{definition}
\newtheorem{definition}[numerable]{Definition}
\newtheorem{example}[numerable]{Example}
\newtheorem{convention}[numerable]{Convention}
\newtheorem{remark}[numerable]{Remark}
\numberwithin{equation}{section}
\definecolor{details}{RGB}{0,0,255}
\definecolor{task}{RGB}{0,191,0}
\definecolor{sketch}{RGB}{255,0,0}
\newcommand{\D}{\ensuremath{\Delta}}
\newcommand{\Der}{\operatorname{Der}}
\newcommand{\Hom}{\operatorname{Hom}}
\newcommand{\K}{\ensuremath{\mathbb{K}}}
\newcommand{\Ndelred}[2]{\widetilde{N}_{#1}(#2)}
\newcommand{\Ndel}[2]{N_{#1}(#2)}
\newcommand{\N}{\mathbb{N}}
\newcommand{\M}{\ensuremath{\mathcal{M}}}
\newcommand{\Z}{\mathbb{Z}}
\newcommand{\bfa}{\ensuremath{\mathbf{a}}}
\newcommand{\bfb}{\ensuremath{\mathbf{b}}}
\newcommand{\bfc}{\ensuremath{\mathbf{c}}}
\newcommand{\bff}{\ensuremath{\mathbf{f}}}
\newcommand{\card}[1]{\#{#1}}
\newcommand{\cone}{\ensuremath{\operatorname{cone}}}
\newcommand{\deff}[1]{{\textbf{#1}}}
\newcommand{\definedas}{\coloneqq}
\newcommand{\defi}[1]{\textbf{#1}}
\newcommand{\firstcotangentcohomology}[2]{T^1_{#1}(#2)}
\newcommand{\incgraphred}[2]{G^\circ_{#1}(#2)}
\newcommand{\incgraph}[2]{G_{#1}(#2)}
\newcommand{\link}[2][M]{\ensuremath{\operatorname{{link}}_{#1}#2}}
\newcommand{\lk}{\ensuremath{\operatorname{link}}}
\newcommand{\matroidrel}[3]{\mathcal{#1}_{#2}(#3)}
\newcommand{\matroid}[2]{\mathcal{#1}_{#2}}
\newcommand{\opensimplex}[1]{\ensuremath{\braket{#1}}}
\newcommand{\pathcomponents}[1]{\pi_0(#1)}
\newcommand{\rank}{\ensuremath{\operatorname{rk}}}
\newcommand{\rk}[2]{\ensuremath{\operatorname{rk}_{#1}(#2)}}
\newcommand{\sdif}{\ensuremath{\,\scalebox{.8}{$\fgebackslash$}\,}}
\newcommand{\sodass}{~\,:\,~}
\newcommand{\str}{\ensuremath{\operatorname{star}}}
\newcommand{\supp}{\operatorname{supp}}
\newcommand{\too}{\ensuremath{\longrightarrow}}
\newcommand{\uniform}[2]{U_{#1}^{#2}}
\renewcommand{\min}{\operatorname{min}}
\begin{document}

\title{
The first Cotangent Cohomology Module for Matroids
}
\author{William Bitsch}
\email{bitschwilliam@gmail.com}
\address{Institute of Mathematics, Freie Universit\"at Berlin, Germany}
\author{Alexandru Constantinescu}
\email{aconstant@zedat.fu-berlin.de}
\address{Institute of Mathematics, Freie Universit\"at Berlin, Germany}
 \keywords{matroid, cotangent cohomology, Stanley-Reisner ring}
  \subjclass[2010]{05B35, 13D03, 13F55}

\begin{abstract}
  We find a combinatorial formula which computes the first cotangent cohomology module of Stanley-Reisner rings associated to matroids. 
  For arbitrary simplicial complexes we provide upper bounds for the dimensions of the multigraded components of $T^1$. For specific degrees we prove that these bounds are reached if and only if the simplicial complex is a matroid, obtaining thus a new characterization for matroids.
  Furthermore, the graded first cotangent cohomology turns out to be a complete invariant for nondiscrete matroids.   
\end{abstract}
\maketitle
\section{Introduction}
The cotangent cohomology modules $T^i$  (or André-Quillen cohomology modules) of a commutative ring are obtained from the derived functor of the derivation functor \cite{And74}. In  cohomological degrees one and two these had been previously introduced by Lichtenbaum and Schlessinger \cite{LS67}. The interest in these two degrees comes from deformation theory. The first module   parametrizes first order deformations up to isomorphism; i.e. deformations with parameter space $\K[\varepsilon]/(\varepsilon^2)$. The second module contains all obstructions to lifting such deformations to larger parameter spaces, but may contain more than that.

In this paper  we view matroids as abstract simplicial complexes whose  maximal faces  satisfy the basis-exchange axiom.
Our main contribution is a complete characterization of  $T^1$  for the Stanley-Reisner rings associated to matroids. We also show one can read from $T^1$ whether a simplicial complex is a matroid or not.  We prove that only for matroids  one can  recover from $T^1$ the combinatorial structure\footnote{ Unless the matroid consists \emph{only} of loops and coloops. These are precisely the matroids with $T^1=0$}.
The main tool that we use is the  description  of the multigraded components of these modules for arbitrary Stanley-Reisner rings   in terms of the relative cohomology of certain topological spaces   given by Altmann and Christophersen in \cite{altmann2000stanleyreisner}.

Other  algebraic properties of simplicial complexes completely characterize matroids. For instance, the symbolic powers of a radical monomial ideal are Cohen-Macaulay precisely when the simplicial complex is a matroid  \cite{Var11,MT11}. Besides adding a new algebraic characterization for matroids, our   motivation comes from the possibility to construct flat families whose special fibre is the Stanley-Reisner ring of the matroid. Constructing explicit deformations
of the projective scheme associated to matroidal Stanley-Reisner rings may prove useful for understanding numerical invariants of matroids. This is because, thanks to upper semicontinuity, homological invariants are preserved; in particular the $h$-vector is constant on the fibers.

In Section 2 we recall  terminology, we fix notation, and we briefly present the main tools from \cite{altmann2000stanleyreisner} that we will use.
In Section 3 we  refine some techniques for computing the cotangent cohomology which we apply to arbitrary simplicial complexes.
The main result in this section is an upper bound on the vector space dimension of the  graded components of $T^1$ (Porpositions~\ref{prop:BoundOfDegreewiseDimensionOfFirstCCMOne}~and~\ref{prop:dependentCotangentCohomology}).
In Section 4 we calculate the first cotangent cohomology of matroidal Stanley-Reisner rings  and prove that a simplicial complex is a matroid if and only if the dimensions of the components of $T^1$ are obtained from our formula (Theorem~\ref{thm:FirstCotangentCohomologyMatroid}). It turns out that, to determine if a simplicial complex on $[n]$ is a matroid, it is enough to know the dimensions of the $\Z^n-$graded components $T^1_{-e_{i}}$ for $i=1,\dots, n$ (Corollary~\ref{FirstCotangentFormulaDeterminesMatroid}).
In Section 5 we show how one can recover the independent sets of a  matroid from the graded cotangent cohomology module of the associated  Stanley-Reisner ring (Theorem~\ref{thm:ReconstructMatroidsFromCCM}). We start by characterizing the algebraically rigid matroids, namely those with $T^1=0$.
These turn out to be precisely the discrete matroids: those which are the join of a simplex with some loops (Corollary~\ref{cor:trivialcotangentcohom.}). This is in contrast with arbitrary simplicial complexes, for which a complete characterization of algebraic rigidity is still missing. Significant partial results on this topic were obtained by the authors of \cite{altmann2016rigidity}.

\section{Preliminaries and notation}\label{sec:Abstract simplicial complexes and matroids}

\subsection{Combinatorics}
Matroids were first defined in the 1930s to abstract the combinatorics of linear independence.
They did so with remarkable success. 
Given a finite set of vectors $E$, one is interested in the combinatorial structure of the subsets of vectors that are linearly independent. Let $\card{}$ denote the cardinality of a set. The properties which are abstracted are:
\begin{enumerate}[label={({\bf I\arabic*})}]
\item\label{item:indep1}\textit{If $I$ is an independent set and $J\subseteq I$, then $J$ is also an independent set.}
\item\label{item:indep2}\textit{If $I, J$ are independent and $\card{J}<\card{I}$, there exists $v\in I\sdif J$ such that $J\cup\set{v}$ is independent.}
\end{enumerate}
A collection $\Delta\subseteq 2^E$ of subsets of a finite set $E$ satisfying only condition \ref{item:indep1} is called an \deff{abstract simplicial complex}\footnote{ We will  usually drop the word \emph{abstract} and just use \deff{simplicial complex}.} on the vertex set $E$. Unless otherwise stated, we  assume that for some positive natural number $n$ we have  $E=[n]=\set{1,\dots,n}$.  The subsets in $\Delta$ will be called \deff{faces}. The faces which are maximal under inclusion will be called \deff{facets}. A subset of $C\subseteq [n]$ is a \deff{nonface} of \D\ if $C\notin \D$; if all proper subsets of $C$ are in \D, then $C$ is called a minimal nonface.
In accordance with the upcoming definitions for matroids, we will denote for all simplicial complexes:
\begin{eqnarray*}
  \matroid{C}{\D}&=&\Set{C\subseteq [n]\sodass C\text{~is a minimal nonface of~}\D},\\
  \matroid{B}{\D}&=&\Set{B\subseteq [n]\sodass B\text{~is a facet  of~}\D}.
\end{eqnarray*}
A \deff{matroid}  is a nonempty\footnote{ This means that $\Delta=\emptyset$. If a simplicial complex satisfies $\Delta\neq \emptyset$, then by \ref{item:indep1} we have $\emptyset\in\Delta$. For matroids this last condition is usually included as an axiom.}
 simplicial complex whose faces satisfy \ref{item:indep2}. We will call the faces of a matroid  \deff{independent sets} and the facets of a matroid \deff{bases}. The minimal nonfaces of a matroid are called \deff{circuits}. Matroids have equivalent characterizations in terms of their bases \cite[Section\,1.2]{OXLEY} or of their circuits.
We briefly recall the latter, as it will be used later. A nonempty simplicial complex $\Delta$ is a matroid if and only if its minimal nonfaces satisfy the \deff{strong circuit elimination axiom} \cite[Proposition~1.4.12]{OXLEY}: 
\begin{enumerate}[label={({\bf C3'})},leftmargin = 3em, labelwidth=3em, itemindent = 0em, labelindent = 0em, listparindent = 3em]
\item\label{item:circEl1} \textit{If $C$ and $C'$ are distinct minimal nonfaces, $i\in C\cap C'$ and $v\in C\sdif C'$ then there exists a minimal nonface $C''$ with
  $ v\in C''\subseteq \left(C\cup C'\right)\sdif\Set{i}$.}
\end{enumerate}
We will use \D\ to denote simplicial complexes which are not necessarily matroids and reserve the notation \M\ for matroids.

For every simplicial complex $\Delta$ on $[n]$ we define the \deff{rank function} $\rank_\Delta:2^{[n]}\too \N$ by
\[
  \rk{\Delta}{A} = \max\Set{ \card{F} \sodass F \subseteq A~\text{and~} F \in \Delta },
\]
Matroids can be viewed as simplicial complexes whose rank function satisfies the semimodular inequality \cite[Chapter\,1.3]{OXLEY}. The rank of the simplicial complex  will be the maximum value of the rank function. In particular $\rank\Delta =\max\Set{\card{F}\sodass F\in\D}$.

A \deff{loop} is an element $v\in[n]$ with $\set{v}\notin \D$; equivalently, $v$ is not contained in any face of \D. A \deff{coloop} is a vertex $v\in [n]$ which is contained in every facet; alternatively, a coloop is not contained in any minimal nonface.
Let $W$ be a subset of $[n]$. The \deff{restriction} of \D\ to  $W$ is the simplicial complex on $W$ given by
\[
  \D|_W = \Set{F \in \D\sodass F \subseteq W}.
\]
The \deff{deletion} of $W$ is the restriction to the complement of $W$ in $[n]$:
\[
  \D\sdif W =\D|_{[n]\sdif W}.
\]
Given another simplicial complex $\Gamma$, the  \defi{join} of $\Delta$ and $\Gamma$ is
\[
  \Delta\ast\Gamma = \Set{F\sqcup G\sodass F\in \Delta \text{~and~}G\in\Gamma},
\]
where $\sqcup$ stands for the disjoint union. The \defi{link}\footnote{ This is a particular case of contraction, which can defined for every subset $[n]$, not just for faces. For our purposes here, the link of a face will suffice.} of a face $F\in \Delta$ is defined as 
\[
    \link[\Delta]{F} =  \Set{A \in \D\sodass A \cap F = \emptyset\text{~and~} A \cup F \in \D}.
\]
For every finite set $F$, the abstract simplex on $F$ is $2^F=\Set{A\subseteq F}$. The \deff{star} of a face $F\in\Delta$ is
\[
  \str_\Delta F =  2^F \ast \lk_\Delta F  = \Set{G \in\Delta \sodass F\cup G\in\Delta}.
\]

\subsection{Algebra}
Let \K\ be an arbitrary field and $S=\K[x_1,\dots,x_n]$ the polynomial ring in $n\in\N_{>0}$ variables with coefficients in \K. To every simplicial complex \D\ on $[n]$ we associate a radical monomial ideal of $S$ called its \deff{Stanley-Reisner ideal}:
\[
  \textstyle
  I_\D =\left\langle~  \prod_{i \in F} x_i \sodass F \in 2^{[n]}\sdif \D \right\rangle \subseteq S.
\]
This gives a bijection between simplicial complexes on $[n]$ and radical monomial ideals of $S$. The quotient ring $\K[\D]=S/I_\D$ is called the \deff{Stanley-Reisner ring} of \D\ over the field \K. If $\Delta$ is a matroid, we will call the associated Stanley-Reisner ring or ideal \defi{matroidal}. \\[1ex]
\indent We will now introduce the first cotangent cohomology module for Stanley-Reisner rings in the ad hoc way of \cite{altmann2000stanleyreisner}. For the general homological theory we refer to the books of André \cite{And74} and of Loday \cite{Lod13}, and for the connection to deformation theory we refer to Hartshorne's book \cite{HAr09} and Sernesi's book \cite{Ser07}.
While some  algebraic structures related to Stanley-Reisner rings we are about to introduce depend on the choice of field and its characteristic, the \K-vector space  dimensions of the cotangent cohomology modules depend only on the combinatorics of the complex \cite[Corollary\,1.4]{altmann2016rigidity}. As we are only interested in these dimensions,  we will for simplicity not mention ``over \K'' in the following definitions.

 For the polynomial ring $S=\K[x_1,\dots,x_n]$ we denote by
\[
  \Der_\K(S,S)=\Set{\partial\in\Hom_\K(S,S) \sodass \partial(f g) = f \partial(g) + \partial(f) g,~~\forall~f,g \in S},
\]
the $S-$module of the $\K-$linear derivations.
For any ideal $I\subseteq S$ the \deff{first cotangent cohomology module} $T^1(S/I)$ is the cokernel of the natural map $\Der_\K(S,S)\too\Hom_S(I,S/I)$.
As $I\subseteq S$ is a monomial ideal, the Stanley-Reisner ring, its resolution, and all the modules defined above are $\Z^n-$graded.
For $\bfc\in\Z^n$  we denote the $\Z^n-$graded components of the first cotangent cohomology module by
\[
  T^1_\bfc(S/I).
\]
Once the field \K\ is fixed, we will denote simply by $T^{1}(\D)$ the first cotangent cohomology of $S/I_\D$.  We call a simplicial complex \D\ \defi{algebraically rigid} if $T^1(\D)=0$.  The complex  \D\ is called \defi{$\emptyset$-rigid} if $T^1_\bfc(\D) =0$ for all $\bfc\in \Z_{\leqslant 0}^n$. The support of a vector $\bfa\in\N^n$ is defined as the set $\supp\bfa = \Set{i\in[n]\sodass a_i\neq 0}\subseteq [n]$. We will write every vector $\bfc\in\Z^n$ as
\[
  \bfc=\bfa-\bfb \quad\text{with ~~$\bfa, \bfb\in\N^n$ ~~and~~ $\supp \bfa \cap \supp \bfb = \emptyset$.}
\]
In this notation, $\emptyset$-rigid means $T^1_{-\bfb}(\D)=0$ for all $\bfb\in\N^{n}$. We paraphrase the following result. 
\begin{lemma}[{\cite[Lemma 2]{altmann2000stanleyreisner}}]
  \label{lem:L2AC}
  The module $T^1_{\bfa-\bfb}$ vanishes unless $0\neq\bfb\in\{0,1\}^n$, $\supp\bfa\in\D$ and $\supp \bfb \subseteq [\lk_\D \supp \bfa]$\footnote{ Where $[\D]:=\Set{v\in[n]\sodass v\in\D}$ denotes the set of vertices appearing in $\D$.}.
With these conditions  fulfilled, $T^1_{\bfa-\bfb}$  depends only on $\supp\bfa$ and \bfb.
\end{lemma}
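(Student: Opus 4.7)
The plan is to unravel the definition
\[
T^1_\bfc(S/I_\D) \;=\; \operatorname{coker}\!\bigl(\Der_\K(S,S)_\bfc \too \Hom_S(I_\D, S/I_\D)_\bfc\bigr)
\]
and work componentwise in the $\Z^n$-grading. A graded $S$-module homomorphism $\phi\in\Hom_S(I_\D, S/I_\D)_\bfc$ is determined by its values on the minimal monomial generators $x_C:=\prod_{i\in C}x_i$ indexed by $C\in\matroid{C}{\D}$. Each $\phi(x_C)$ lives in $(S/I_\D)_{\bfc+e_C}$, which is one-dimensional---spanned by the monomial $x^{\bfc+e_C}$---precisely when $\bfc+e_C\in\N^n$ and $\supp(\bfc+e_C)\in\D$, and is zero otherwise. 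Writing $\bfc=\bfa-\bfb$ with disjoint supports, $\phi$ is therefore encoded by scalars $\lambda_C$ with $\phi(x_C)=\lambda_C\, x^{\bfa-\bfb+e_C}$, subject to the relations induced by syzygies among the $x_C$.

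I would first dispatch the three easy vanishing conditions by inspecting single components. If some $b_j\geq 2$, then $a_j=0$ together with $(e_C)_j\leq 1$ force $(\bfa-\bfb+e_C)_j\leq -1$ for every $C$, so the whole $\Hom$-component is zero. If $\bfb=0$, then $\supp(\bfa+e_C)\supseteq C$ is a nonface; if $\supp\bfa\notin\D$, then $\supp(\bfa-\bfb+e_C)\supseteq\supp\bfa$ is a nonface; in both cases every $\phi(x_C)$ vanishes.

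The link condition $\supp\bfb\subseteq[\lk_\D\supp\bfa]$ is the main obstacle and forces us to exploit syzygies. For two distinct minimal nonfaces $C,C'$, the Taylor relation $x_{C'\setminus C}\cdot x_C = x_{C\setminus C'}\cdot x_{C'}$ gives $x_{C'\setminus C}\phi(x_C) = x_{C\setminus C'}\phi(x_{C'})$ in $S/I_\D$, both sides sitting in degree $\bfa-\bfb+e_{C\cup C'}$, which forces $\lambda_C=\lambda_{C'}$ whenever the common monomial $x^{\bfa-\bfb+e_{C\cup C'}}$ is nonzero in $S/I_\D$. Assume some $j\in\supp\bfb$ satisfies $\{j\}\cup\supp\bfa\notin\D$ and choose a minimal nonface $C^*$ with $j\in C^*\subseteq\{j\}\cup\supp\bfa$. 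If $|\supp\bfb|\geq 2$, then $\supp\bfb\not\subseteq C^*$ forces $\lambda_{C^*}=0$ outright, and for every active $C$ (one with $\supp\bfb\subseteq C$ and $\supp\bfa\cup(C\setminus\supp\bfb)\in\D$) the Taylor relation between $C$ and $C^*$ propagates $\lambda_C=\lambda_{C^*}=0$, since $C^*\setminus\supp\bfb\subseteq\supp\bfa$ makes $\supp\bfa\cup((C\cup C^*)\setminus\supp\bfb)$ collapse to the face $\supp\bfa\cup(C\setminus\supp\bfb)\in\D$. If instead $\supp\bfb=\{j\}$, the sole derivation in degree $\bfa-\bfb$ is $x^\bfa\partial_j$ and sends each active $x_C$ to $x^{\bfa-e_j+e_C}$, while the Taylor syzygies through $C^*$ identify all active $\lambda_C$; thus the $\Hom$-component is one-dimensional and is entirely hit by the derivation, so $T^1_{\bfa-\bfb}=0$.

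Finally, for the claim that $T^1_{\bfa-\bfb}$ depends only on $\supp\bfa$ and $\bfb$, I would observe that every piece of data entering the calculation---the set of active $C$, the syzygy equations identifying $\lambda_C=\lambda_{C'}$ (governed by whether $\supp\bfa\cup((C\cup C')\setminus\supp\bfb)\in\D$), and, when $\bfb=e_j$, the image of the unique derivation $x^\bfa\partial_j$---is formulated via set-theoretic operations on $\supp\bfa$, $\bfb$ and the minimal nonfaces of $\D$. The particular exponents of $\bfa$ never enter these combinatorial conditions, so the resulting $\K$-vector space depends only on $\supp\bfa$ and $\bfb$.
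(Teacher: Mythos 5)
The paper does not prove this lemma; it is imported verbatim from Altmann--Christophersen (\cite[Lemma~2]{altmann2000stanleyreisner}). Your reconstruction follows the same route as the original: compute the $\Z^n$-graded pieces of $\Hom_S(I_\D,S/I_\D)$ and $\Der_\K(S,S)$ directly, using that $(S/I_\D)_{\bfd}$ is $\K$ or $0$ according to whether $\bfd\in\N^n$ and $\supp\bfd\in\D$, and cut down the freedom on $\phi(x_C)$ by the (degree-two) Taylor syzygies. The computation is sound: the three easy vanishing conditions are correct; for the link condition you correctly identify that the chosen $C^*\subseteq\{j\}\cup\supp\bfa$ satisfies $C^*\setminus\supp\bfb\subseteq\supp\bfa$, so the relevant cross-terms $x^{\bfa-\bfb+e_{C\cup C^*}}$ are nonzero in $S/I_\D$, and the syzygy with $C^*$ propagates either $\lambda_C=0$ (when $\card{\bfb}\geqslant 2$ and $\phi(x_{C^*})$ sits in a zero space) or $\lambda_C=\lambda_{C^*}$ (when $\bfb=e_j$, in which case the unique derivation $x^{\bfa}\partial_j$ hits precisely the one-dimensional space of such $\phi$).

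Two small points worth tightening. When $\card{\bfb}\geqslant 2$, writing ``$\lambda_{C^*}=0$'' is an abuse of notation since $(S/I_\D)_{\bfa-\bfb+e_{C^*}}=0$; what you mean is $\phi(x_{C^*})=0$, and the syzygy then forces $\lambda_C\cdot x^{\bfa-\bfb+e_{C\cup C^*}}=0$, hence $\lambda_C=0$. For the final claim that $T^1_{\bfa-\bfb}$ depends only on $\supp\bfa$ and $\bfb$, the combinatorial-invariance argument is right in spirit, but it would be cleaner to exhibit an explicit isomorphism $\Hom_S(I_\D,S/I_\D)_{\bfa-\bfb}\to\Hom_S(I_\D,S/I_\D)_{\bfa'-\bfb}$ (when $\supp\bfa=\supp\bfa'$) by $\lambda_C\mapsto\lambda_C$ on the active $C$, noting it commutes with the map from $\Der$; the set of active $C$, the syzygy identifications, and the derivation image all depend on $A=\supp\bfa$ and $\bfb$ only through which supports lie in $\D$, never through the actual exponent values.
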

 \noindent Furthermore, in \cite[Proposition~11]{altmann2000stanleyreisner} it is shown that  a combinatorial interpretation for the case $\bfa=0$ is enough. In particular, if we denote by $A=\supp\bfa$, we have that
\begin{equation}
  \label{eq:TiOfLink}
  T^1_{\bfa - \bfb}(\D) = T^1_{-\bfb}(\lk_\D A).
\end{equation}
For the above reason we will use the following convention.
\begin{convention}
  Throughout this paper $\bfb$ will always denote a 0-1 vector, and we will use the same notation for its support. So, according to context, we may have 
  \[
    \bfb\in\set{0,1}^n \quad\text{or}\quad\bfb\subseteq [n].
    \]    
\end{convention}
\noindent To present the combinatorial characterization of $T^1_{-\bfb}(\D)$ from \cite{altmann2000stanleyreisner} we need the following.  
\begin{definition}
Let $\Delta$ be a simplicial complex on $[n]$ and $\bfb \subseteq [n]$. We define
\begin{align*}
    & \Ndel{\bfb}{\D} = \Set{F  \in \D \sodass F \cap \bfb = \emptyset,~~ F \cup \bfb \notin \D}  \text{ and }\\
    & \Ndelred{\bfb}{\D} = \Set{F  \in \Ndel{\bfb}{\D} \sodass \exists~ \bfb' \subsetneq \bfb \text{ with } F\cup \bfb' \notin \D}.
\end{align*}
\end{definition}
\begin{remark}
  \label{rem:Ndel}
  By the above  definition we have
  \[
    \Ndel{\bfb}{\D} =
    \begin{cases}
      \D\sdif \str_\D \bfb&\text{if~}\bfb\in\D,\\
      \D\sdif \bfb &\text{if~}\bfb\notin\D.      
    \end{cases}\\
  \]
\end{remark}
\noindent For every nonempty set $F\subset [n]$ one assigns the \defi{relatively open simplex}
 \[
   \textstyle
  \opensimplex{F} = \Set{\alpha:[n]\too[0,1]\sodass \sum_{i=1}^n\alpha(i)=1 \text{~and~} (\,\alpha(i)\neq 0 \iff i\in F\,)}.
\]
Each collection of subsets $\Gamma\subseteq 2^{[n]}$ determines thus a topological space in the following way.
\[
  \opensimplex{\Gamma}=
  \begin{cases}
\textstyle    \bigcup_{F\in\Gamma} \opensimplex{F}&\text{if~}\emptyset\notin \Gamma,\\[1ex]
\textstyle    \cone\left(\bigcup_{F\in\Gamma}\opensimplex{F}\right)&\text{if~}\emptyset\in \Gamma.\\    
  \end{cases}
\]
Many of our proofs rely  on the following theorem of Altmann and Christophersen.
\begin{theorem}[{\cite[Theorem 9]{altmann2000stanleyreisner}}]
  \label{thm:TiAsRelativeCohomology}
  Let \D\ be a simplicial complex on $[n]$ and $\bfb\in\Set{0,1}^n$, which we will identify  with its support. If $\card{\bfb}>1$, then $T^1_{-\bfb}(\D)$ is given by 
  \[
    T^1_{-\bfb}(\D) \simeq H^{0}(\opensimplex{\Ndel{\bfb}{\D}},\opensimplex{\Ndelred{\bfb}{\D}},\K)\qquad\text{for }.
  \]
  If $\# \bfb=1$, then the above formula holds if we use the reduced relative cohomology instead\,\footnote{~It is easy to see, that if $\#\bfb=1$, then $\Ndelred{\bfb}{\D}=\emptyset$. So one actually takes the relative cohomology of $\Ndel{\bfb}{\D}$, avoiding thus  reduced relative cohomology.}.   
\end{theorem}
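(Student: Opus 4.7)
The plan is to derive this from the Schlessinger--Lichtenbaum presentation of $T^1$, decoding the $-\bfb$ graded component of the cokernel of $\Der_\K(S,S)\to\Hom_S(I_\D,S/I_\D)$ into combinatorial data on $\Ndel{\bfb}{\D}$. Since $I_\D$ is generated by the monomials $x^C$ with $C\in\matroid{C}{\D}$, a homomorphism $\phi\in\Hom_S(I_\D,S/I_\D)_{-\bfb}$ is determined by its values on these generators, each lying in $(S/I_\D)_{C-\bfb}$. This graded piece is one-dimensional, spanned by $x^{C\sdif\bfb}$, precisely when $\bfb\subseteq C$ and $C\sdif\bfb\in\D$, and is zero otherwise. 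Hence $\phi$ is encoded by a family of scalars $(\lambda_C)$ indexed by minimal nonfaces $C\supseteq\bfb$, and the substitution $C\mapsto F:=C\sdif\bfb$ identifies this index set with $\Ndel{\bfb}{\D}\sdif\Ndelred{\bfb}{\D}$: minimality of $C=F\cup\bfb$ as a nonface amounts to $F\cup\bfb'\in\D$ for every $\bfb'\subsetneq\bfb$, the negation of membership in $\Ndelred{\bfb}{\D}$.

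Next I would impose the Taylor syzygies, which generate all relations among the $x^C$. For two minimal nonfaces $C,C'$, applying $\phi$ to $x^{C'\sdif C}e_C-x^{C\sdif C'}e_{C'}$ and reducing modulo $I_\D$ yields
\[
\lambda_C\,x^{(C\cup C')\sdif\bfb}\equiv\lambda_{C'}\,x^{(C\cup C')\sdif\bfb}\pmod{I_\D},
\]
with the convention $\lambda_{C'}=0$ whenever $\bfb\not\subseteq C'$. If $(C\cup C')\sdif\bfb\in\D$, the monomial survives in $S/I_\D$ and we obtain either $\lambda_C=\lambda_{C'}$ (when both $C,C'$ contain $\bfb$) or the forced vanishing $\lambda_C=0$ (when only $C$ contains $\bfb$); otherwise the relation is automatic. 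Interpreting $(\lambda_C)$ as a $\K$-valued $0$-cochain on $\opensimplex{\Ndel{\bfb}{\D}}$, extended by $0$ over $\opensimplex{\Ndelred{\bfb}{\D}}$, the first type of relation produces local constancy on the connected components of $\opensimplex{\Ndel{\bfb}{\D}}$, and the second forces the cochain to vanish on every component that meets $\opensimplex{\Ndelred{\bfb}{\D}}$ (since for such a component one can exhibit $H\in\Ndelred{\bfb}{\D}$ containing the relevant $F$, and a minimal nonface $C'\subseteq F\cup(\bfb\cap H)\subsetneq F\cup\bfb$ witnessing the Case~2 relation). This identifies $\Hom_S(I_\D,S/I_\D)_{-\bfb}$ with $H^0\bigl(\opensimplex{\Ndel{\bfb}{\D}},\opensimplex{\Ndelred{\bfb}{\D}};\K\bigr)$.

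Finally, I would quotient by the image of $\Der_\K(S,S)_{-\bfb}$. When $\card{\bfb}\geq 2$, this graded piece is zero, so the identification above is already $T^1_{-\bfb}(\D)$. When $\card{\bfb}=1$, say $\bfb=\{i\}$, the only contribution is the partial derivative $\partial_i$, and the chain rule shows that its image under the bijection is the constant cochain $1$ on all of $\opensimplex{\Ndel{\bfb}{\D}}$; dividing by the line it spans is exactly the passage from $H^0$ to reduced $\widetilde H^0$, consistent with the footnote observation that $\Ndelred{\bfb}{\D}=\emptyset$ in this case. The hardest part will be the middle step: one must verify carefully that the pairwise Taylor syzygies, combined through chains of common cofaces in $\Ndel{\bfb}{\D}$, generate exactly the equivalence relation of lying in the same connected component of $\opensimplex{\Ndel{\bfb}{\D}}$, and that the forced vanishings pick out precisely those components meeting $\opensimplex{\Ndelred{\bfb}{\D}}$, neither more nor fewer.
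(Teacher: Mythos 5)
The paper itself does not prove this theorem; it is cited verbatim from Altmann--Christophersen \cite[Theorem~9]{altmann2000stanleyreisner}. So there is no in-paper proof to compare against; your route (graded normal module plus Taylor syzygies) is the standard and expected one, and structurally it is the right shape of argument.

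That said, there are concrete errors. First, your identification of the index set is wrong: the minimal nonfaces $C\supseteq\bfb$ do \emph{not} biject with $\Ndel{\bfb}{\D}\sdif\Ndelred{\bfb}{\D}$ under $C\mapsto C\sdif\bfb$. Minimality of $C=F\cup\bfb$ as a nonface requires \emph{both} that $F\cup\bfb'\in\D$ for every $\bfb'\subsetneq\bfb$ (your condition, i.e.\ $F\notin\Ndelred{\bfb}{\D}$) \emph{and} that $F'\cup\bfb\in\D$ for every $F'\subsetneq F$ (i.e.\ $F$ is inclusion-minimal in $\Ndel{\bfb}{\D}$); you dropped the second half. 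The correct image is $\min_{\subseteq}\Ndel{\bfb}{\D}\sdif\Ndelred{\bfb}{\D}$, which is exactly what Lemma~\ref{lemma:MinimalElementsofNdelAndNdelred} in the paper records. Your $(\lambda_C)$ is therefore a function on the minimal vertices of $\opensimplex{\Ndel{\bfb}{\D}}$ lying outside $\opensimplex{\Ndelred{\bfb}{\D}}$, not a cochain on the whole space, and the argument must be rephrased accordingly. Second, the parenthetical justification of the forced vanishing is internally inconsistent: every $H\in\Ndelred{\bfb}{\D}$ satisfies $H\cap\bfb=\emptyset$ by definition, so your displayed minimal nonface $C'\subseteq F\cup(\bfb\cap H)=F$ would be a nonface contained in a face. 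What you actually need is the $\bfb'\subsetneq\bfb$ with $H\cup\bfb'\notin\D$, a minimal nonface $C'\subseteq H\cup\bfb'$ (so $\emptyset\neq C'\cap\bfb\subsetneq\bfb$), and then the check that $(C\cup C')\sdif\bfb\subseteq H\in\D$, which makes the Taylor relation $\lambda_C\, x^{(C\cup C')\sdif\bfb}=0$ effective. Finally, you yourself flag that the core step --- that the Taylor syzygies impose exactly the relation of lying in the same component of $\opensimplex{\Ndel{\bfb}{\D}}$, and kill precisely the components meeting $\opensimplex{\Ndelred{\bfb}{\D}}$, with no further constraints --- is not actually carried out. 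Both inclusions (relations imply local constancy and forced vanishing; every such locally constant function arises) need to be verified, and neither is done. As it stands this is a plausible plan with two local mistakes and the main verification missing, not a proof.
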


\section{Upper bounds on the dimension of  $T^1$ for simplicial complexes}
\label{sec:T1inGeneral}
In this section we study the first cotangent module for abstract simplicial complexes. The main results are \Cref{prop:BoundOfDegreewiseDimensionOfFirstCCMOne}, which gives an upper  bound for the dimension of $T^1_{\bfa-\bfb}(\D)$ when $\bfb$ is supported on a face of $\D$, and \Cref{prop:dependentCotangentCohomology} which gives a full description of $T^1_{\bfa-\bfb}(\D)$ when $\bfb$ is supported on a nonface of \D.\\[1ex]
\indent We start by studying $\Ndel{\bfb}{\D}$ and $\Ndelred{\bfb}{\D}$. To this aim, let $A\in \Delta$ and $\bfb\in[n]$, with $\bfb\neq \emptyset$. Define the (unoriented) graph $\incgraph{A,\bfb}{\Delta}$ on the vertex set $\Ndel{\bfb}{\lk_\D A}$, where
\[
\Set{F_1, F_2}\text{~is an edge~}\iff F_1\subsetneq F_2\text{~~or~~}F_2\subsetneq F_1.
\]
Let $\incgraphred{A,\bfb}{\Delta}$ be the subgraph of $\incgraph{A,\bfb}{\Delta}$ that consists of all connected components that contain no vertex from  $\Ndelred{\bfb}{\link[\Delta]{A}}$. By Theorem~\ref{thm:TiAsRelativeCohomology} the graded components of $T^1$ are isomorphic to some relative cohomology module $H^0$. So  $T^1_{\bfa-\bfb}$ counts the number of connected components which do not intersect the given subspace. Thus, denoting $\supp \bfa  = A$,  we get from \cite[Theorem 9]{altmann2000stanleyreisner} that
\begin{equation}
  \label{eq:dimT1FromGraph}
  \dim_\K T^1_{\bfa-\bfb}(\D) =
  \begin{cases}
    \text{the number of connected components of $\incgraphred{A,\bfb}{\D}$}&\text{if~}\card{\bfb}>1,\\
    \text{the number of connected components of $\incgraphred{A,\bfb}{\D}$} - 1 &\text{if~}\card{\bfb}=1.
  \end{cases}
\end{equation}

\begin{lemma}\label{lemma:MinimalElementsofNdelAndNdelred}
  Let $\Delta$ be a simplicial complex on $[n]$ and $\bfb\subseteq [n]$. The inclusion-minimal elements of $\Ndel{\bfb}{\D}$ and $\Ndelred{\bfb}{\D}$ satisfy:
\begin{enumerate}[label=(\roman*),font=\upshape]
    \item\label{item:lme1}$\min_{\subseteq} \Ndel{\bfb}{\D} \subseteq \Set{C \sdif \bfb \sodass C \in \matroid{C}{\Delta},~ C \cap \bfb \neq \emptyset}$.
    \item\label{item:lme2} $\min_{\subseteq} \Ndelred{\bfb}{\D} \subseteq \Set{C \sdif \bfb \sodass C \in \matroid{C}{\Delta},~ C \cap \bfb \neq \emptyset,~ \bfb \nsubseteq C}$.    
\end{enumerate}
\end{lemma}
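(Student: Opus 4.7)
The plan is to prove both inclusions by the same two-step template: take a minimal element $F$ of the relevant set, exhibit a minimal nonface $C$ of $\Delta$ contained in a suitable union $F\cup \mathbf{b}'$, and then verify that $C\sdif \mathbf{b}$ lies again in the set and is contained in $F$, so equality $F = C\sdif\mathbf{b}$ follows by minimality. In both parts the condition $C \cap \mathbf{b}\neq \emptyset$ is forced because otherwise $C = C\sdif\mathbf{b} = F$ would be a face.

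For part \ref{item:lme1}, I would start with $F \in \min_\subseteq \Ndel{\mathbf{b}}{\Delta}$. Since $F \cup \mathbf{b} \notin \Delta$, there is a minimal nonface $C \in \matroid{C}{\Delta}$ with $C \subseteq F\cup\mathbf{b}$. Set $F' \definedas C\sdif \mathbf{b}$. Then $F' \subseteq F$ (because $C\setminus \mathbf{b} \subseteq F \cup\mathbf{b} \setminus \mathbf{b} \subseteq F$), $F'\in \Delta$ (as a subset of $F$), $F'\cap \mathbf{b}=\emptyset$ by construction, and $F'\cup\mathbf{b}\supseteq C \notin \Delta$, so $F'\in\Ndel{\mathbf{b}}{\Delta}$. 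Minimality of $F$ then forces $F=F'$. Finally $C \cap \mathbf{b}$ cannot be empty, for otherwise $C=F\in\Delta$, contradicting that $C$ is a minimal nonface.

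For part \ref{item:lme2}, take $F \in \min_\subseteq \Ndelred{\mathbf{b}}{\Delta}$ with witness $\mathbf{b}'\subsetneq \mathbf{b}$ such that $F\cup\mathbf{b}'\notin\Delta$, and pick a minimal nonface $C\subseteq F\cup\mathbf{b}'$. For any $v\in \mathbf{b}\sdif \mathbf{b}'$, we have $v\notin F\cup\mathbf{b}'$ (because $F\cap\mathbf{b}=\emptyset$), hence $v\notin C$; this gives $\mathbf{b}\nsubseteq C$. As in part (i), $C\cap\mathbf{b}\subseteq \mathbf{b}'$ yields $C\sdif\mathbf{b}=C\sdif\mathbf{b}'\subseteq F$. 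I would then check that $F'\definedas C\sdif\mathbf{b}$ still lies in $\Ndelred{\mathbf{b}}{\Delta}$: it is in $\Ndel{\mathbf{b}}{\Delta}$ by the argument above, and the same $\mathbf{b}'\subsetneq \mathbf{b}$ witnesses the extra condition because $F'\cup\mathbf{b}'=(C\sdif\mathbf{b})\cup\mathbf{b}'\supseteq C\notin\Delta$. Minimality of $F$ and the same non-disjointness argument conclude the proof.

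The only point that is more delicate than bookkeeping is making sure in part \ref{item:lme2} that the same sub-$\mathbf{b}'$ continues to serve as a witness after shrinking $F$ to $C\sdif\mathbf{b}$, and that we correctly extract $\mathbf{b}\nsubseteq C$ rather than just $\mathbf{b}\neq C\cap \mathbf{b}$; both follow from the same calculation $C\cap\mathbf{b}\subseteq\mathbf{b}'\subsetneq\mathbf{b}$, which is where the proof really uses that the witness $\mathbf{b}'$ is proper inside $\mathbf{b}$.
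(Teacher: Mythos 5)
Your proof is correct and follows essentially the same approach as the paper: take a minimal element, find a minimal nonface inside the appropriate union, pass to $C\sdif\bfb$, and invoke minimality. In part~\ref{item:lme2} you carefully fill in the verification that $C\sdif\bfb$ remains in $\Ndelred{\bfb}{\D}$ with the same witness $\bfb'$, which the paper leaves as ``a similar argument as above''; that is the right thing to check and your reasoning, via $C\cap\bfb\subseteq\bfb'$, is sound.
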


\begin{proof}
\noindent \ref{item:lme1} Let $X \in \Ndel{\bfb}{\D}$ be minimal under inclusion. The set $X \cup \bfb \notin \D$ contains some minimal nonface $C \in \matroid{C}{\Delta}$.
In particular $C\cap \bfb\neq\emptyset$, since $X \in \D$. 
Thus $C \sdif \bfb \subseteq X$ is a face with
\[
  (C\sdif \bfb)\cap \bfb = \emptyset \quad\text{and}\quad (C\sdif \bfb) \cup \bfb \notin \D.
\]
So by definition $C\sdif \bfb\in\Ndel{\bfb}{\D}$, and by the minimality of $X$, we must have $C\sdif \bfb=X$.\\[1ex]
\ref{item:lme2} If $X \in \Ndelred{\bfb}{\D}$ is minimal under inclusion, $X \cup \bfb' \notin \D$ for some $\bfb' \subsetneq \bfb$. Then there exists $C\in\matroid{C}{\D}$ with $C\subseteq X\cup \bfb'$. Because $X\cap \bfb=\emptyset$ and $\bfb'\subsetneq \bfb$, we cannot have $\bfb\subseteq C\subseteq X\cup \bfb'$. The rest follows by a similar argument as above.
\end{proof}

\begin{proposition}\label{prop:NdelrelEmptysetEquivalence}
Let $\Delta$ be a  simplicial complex on $[n]$ and $\bfb \subseteq [n]$. The following two conditions are equivalent:
\begin{enumerate}[label=(\roman*),font=\upshape, itemsep=5pt]
    \item\label{item:iProp32} $\Ndelred{\bfb}{\D} = \emptyset$.
    \item\label{item:iiProp32} For every $ C \in \matroid{C}{\Delta}$ the set $\bfb$ is either contained in or disjoint to $C$.
\end{enumerate}
Furthermore, the two conditions above imply that 
\begin{enumerate}[label=(\roman*),font=\upshape, itemsep=5pt,start=3]
    \item\label{item:iiiProp32} $\min_{\subseteq} \Ndel{\bfb}{\D} = \Set{C \sdif \bfb \sodass C \in \matroid{C}{\Delta}, \bfb \subseteq C} $.
    \end{enumerate}    
\end{proposition}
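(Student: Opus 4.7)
The plan is to establish the equivalence \ref{item:iProp32} $\iff$ \ref{item:iiProp32} via two contrapositive arguments, and then to deduce \ref{item:iiiProp32} by combining \ref{item:iiProp32} with \Cref{lemma:MinimalElementsofNdelAndNdelred}\,\ref{item:lme1}. Throughout I assume $\bfb \neq \emptyset$, since otherwise $\Ndel{\bfb}{\D} = \Ndelred{\bfb}{\D} = \emptyset$ and the statements are vacuous.

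For \ref{item:iiProp32} $\Rightarrow$ \ref{item:iProp32}, I would begin with any $F \in \Ndelred{\bfb}{\D}$ and a witnessing $\bfb' \subsetneq \bfb$ with $F \cup \bfb' \notin \D$, then extract a minimal nonface $C \in \matroid{C}{\D}$ contained in $F \cup \bfb'$. Since $F \in \D$, the circuit $C$ cannot be contained in $F$, so $C \cap \bfb' \neq \emptyset$ and in particular $C \cap \bfb \neq \emptyset$. On the other hand $F \cap \bfb = \emptyset$ forces $C \cap \bfb \subseteq \bfb' \subsetneq \bfb$, so $\bfb \not\subseteq C$; this contradicts \ref{item:iiProp32}. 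The converse reverses this construction: given a circuit $C$ with $C \cap \bfb \neq \emptyset$ but $\bfb \not\subseteq C$, setting $F \definedas C \sdif \bfb$ and $\bfb' \definedas C \cap \bfb$ yields $\emptyset \neq \bfb' \subsetneq \bfb$, $F \subsetneq C$ (hence $F \in \D$), $F \cap \bfb = \emptyset$, and $F \cup \bfb' = C \notin \D$; therefore $F \cup \bfb \notin \D$ and $F \in \Ndelred{\bfb}{\D}$, contradicting \ref{item:iProp32}.

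For \ref{item:iiiProp32}, \Cref{lemma:MinimalElementsofNdelAndNdelred}\,\ref{item:lme1} already provides
\[
  \min\nolimits_{\subseteq} \Ndel{\bfb}{\D} \subseteq \Set{C \sdif \bfb \sodass C \in \matroid{C}{\D},\ C \cap \bfb \neq \emptyset},
\]
and \ref{item:iiProp32} upgrades the condition $C \cap \bfb \neq \emptyset$ to $\bfb \subseteq C$, giving the forward containment. For the reverse inclusion, given $C \in \matroid{C}{\D}$ with $\bfb \subseteq C$, the set $X = C \sdif \bfb$ is a proper subset of $C$ (since $\bfb$ is nonempty), hence lies in $\D$; it is disjoint from $\bfb$ by construction and satisfies $X \cup \bfb = C \notin \D$, so $X \in \Ndel{\bfb}{\D}$. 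Minimality follows from a short argument: any $F \subsetneq X$ in $\Ndel{\bfb}{\D}$ would yield $F \cup \bfb \subseteq C$ containing a minimal nonface, forced by minimality of $C$ to equal $C$ itself, whence $C \sdif \bfb \subseteq F$, contradicting $F \subsetneq X$.

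The only delicate point is recognizing that the same witness $\bfb' = C \cap \bfb$ plays the symmetric role on both sides of the equivalence; once that choice is made, everything reduces to direct manipulations with the definitions of $\Ndel{\bfb}{\D}$, $\Ndelred{\bfb}{\D}$, and minimal nonfaces.
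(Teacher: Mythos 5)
Your proposal is correct and follows essentially the same route as the paper: contrapositive arguments for the equivalence of \ref{item:iProp32} and \ref{item:iiProp32} (you inline the circuit-extraction argument that the paper delegates to \Cref{lemma:MinimalElementsofNdelAndNdelred}\,\ref{item:lme2}, and your witness $\bfb'=C\cap\bfb$ plays the same role as the paper's $\bfb\sdif\{v\}$), and the same two inclusions via \Cref{lemma:MinimalElementsofNdelAndNdelred}\,\ref{item:lme1} for \ref{item:iiiProp32}. The explicit exclusion of $\bfb=\emptyset$ is a reasonable (and slightly more careful) handling of a degenerate case the paper leaves implicit.
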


\begin{proof}
\ref{item:iProp32} $\Rightarrow$ \ref{item:iiProp32}. Assume there exists a $C \in \matroid{C}{\Delta}$ with $\bfb \nsubseteq C$ and $C \cap \bfb \neq \emptyset$. 
Then $C \sdif \bfb$ is a face which is disjoint to $\bfb$. 
Thus $C \sdif \bfb$ is contained in $\Ndel{\bfb}{\D}$.
Since $\bfb \nsubseteq C$ there exists $v \in \bfb$ such that $(C \sdif \bfb) \cup (\bfb \sdif \Set{v})$ is a nonface. Therefore, $C \sdif \bfb \in \Ndelred{\bfb}{\D}$ contradicting $\Ndelred{\bfb}{\D} = \emptyset$.\\[1ex]
\ref{item:iiProp32} $\Rightarrow$ \ref{item:iProp32}. Assume $\Ndelred{\bfb}{\D}\neq\emptyset$. This implies that it contains a minimal element.  \Cref{lemma:MinimalElementsofNdelAndNdelred}\,\ref{item:lme2} implies thus that
\[
  \Set{C \sdif \bfb \sodass C \in \matroid{C}{\Delta},~ C \cap \bfb \neq \emptyset,~ \bfb \nsubseteq C}\neq\emptyset
\]
which contradicts \ref{item:iiProp32}.\\[1ex]
\ref{item:iiProp32} $\Rightarrow$ \ref{item:iiiProp32}.  By Lemma~\ref{lemma:MinimalElementsofNdelAndNdelred}\,\ref{item:lme1} every minimal Element in $\Ndel{\bfb}{\D}$ is of the form $C\sdif \bfb$ with $C\in\matroid{C}{\D}$ and $C\cap \bfb\neq \emptyset$. 
By \ref{item:iiProp32} it follows from $C \cap \bfb \neq \emptyset$ that $\bfb \subseteq C$, and we have the direct inclusion by the same lemma.
For the other inclusion let $C \in \matroid{C}{\Delta}$ be a minimal nonface with $\bfb \subseteq C$.
By definition, we have that $C \sdif \bfb \in \Ndel{\bfb}{\D}$. 
To see that $C \sdif \bfb$ is also minimal assume there exists $A \in \Ndel{\bfb}{\D}$ with $A \subsetneq C \sdif \bfb$. 
This implies that  $A \cup \bfb \notin \D$, with $A\cup \bfb\subsetneq C$, which contradicts that $C$ is a minimal nonface.
\end{proof}
\begin{remark}
  \label{rem:iii=/=>iInProp}
  The implication \ref{item:iiiProp32}$\Rightarrow$\ref{item:iProp32} in Proposition~\ref{prop:NdelrelEmptysetEquivalence} does not usually hold. For instance, if \D\ is the 1-skeleton of the tetrahedron on $\Set{1,2,3,4}$ and if $\bfb=12:=\Set{1,2}$ then
  \begin{eqnarray*}
    \Ndel{\bfb}{\D}           & = & \Set{3,4,34},                           \\
    \Ndelred{\bfb}{\D}        & = & \Set{34}~~\neq~~\emptyset,~~\text{but~} \\
\min_\subseteq\Ndel{\bfb}{\D}  & = & \Set{3,4}~~=~~\Set{C\sdif \bfb\sodass C\in\matroid{C}{\D}, \bfb\subseteq C}. 
  \end{eqnarray*}    
\end{remark}

\begin{definition}
  Let $\Delta$ be a  simplicial complex on $[n]$ and $\bfb \subseteq [n]$. We define $\matroidrel{C}{\Delta}{\bfb}$ as the set of minimal nonfaces containing $\bfb$:
  \[\matroidrel{C}{\Delta}{\bfb}  \definedas \Set{C \in \matroid{C}{\Delta} \sodass \bfb \subseteq C}.\]
\end{definition}

\begin{lemma}\label{lemma:minimalelementsofmatroidalNdel}
  Let $\Delta$ be a simplicial complex and  $\bfb \subseteq [n]$ be an arbitrary set  such that for every minimal nonface $C\in \matroid{C}{\D}$ we have either $\bfb\cap C=\emptyset$  or $\bfb\subseteq C$.
  The following map  is a bijection:
  \[
    \varphi: \matroidrel{C}{\Delta}{\bfb} \longrightarrow \min_{\subseteq} \Ndel{\bfb}{\D},\quad \ C \mapsto C \sdif \bfb.
  \] 
We can extend $\varphi$ to a surjection from minimal nonfaces containing $\bfb$ to graph-components:
  \[
    \phi: \matroidrel{C}{\Delta}{\bfb} \longrightarrow
    \pathcomponents{\incgraphred{\emptyset,\bfb}{\Delta}},\quad \ C \mapsto [C \sdif \bfb].
  \] 
\end{lemma}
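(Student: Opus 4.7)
The plan is to apply \Cref{prop:NdelrelEmptysetEquivalence} directly: the standing hypothesis of the lemma is verbatim condition \ref{item:iiProp32} of that proposition, which hands us both the identification \ref{item:iiiProp32} of the inclusion-minimal elements of $\Ndel{\bfb}{\D}$ and, via \ref{item:iProp32}, the vanishing $\Ndelred{\bfb}{\D} = \emptyset$. These two consequences drive each half of the lemma, so the bulk of the work has already been done upstream.

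For the bijectivity of $\varphi$, surjectivity onto $\min_{\subseteq}\Ndel{\bfb}{\D}$ is immediate from \ref{item:iiiProp32}. For injectivity I would simply observe that if $C_1 \sdif \bfb = C_2 \sdif \bfb$ with $\bfb \subseteq C_1 \cap C_2$, then taking the union with $\bfb$ on both sides recovers $C_1 = C_2$. This first half is essentially bookkeeping, once \ref{item:iiiProp32} is in hand.

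For the surjectivity of $\phi$, the crucial observation is that $\Ndelred{\bfb}{\D} = \emptyset$ implies $\incgraphred{\emptyset,\bfb}{\D} = \incgraph{\emptyset,\bfb}{\D}$, so it suffices to show that every connected component of the full incidence graph contains some $C \sdif \bfb$ with $C \in \matroidrel{C}{\D}{\bfb}$. Given any vertex $F \in \Ndel{\bfb}{\D}$, the nonface $F \cup \bfb$ contains some minimal nonface $C$; since $F \in \D$ forces $C \not\subseteq F$, we must have $C \cap \bfb \neq \emptyset$, and the hypothesis then upgrades this to $\bfb \subseteq C$, so $C \in \matroidrel{C}{\D}{\bfb}$. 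A short direct check shows $C \sdif \bfb \in \Ndel{\bfb}{\D}$, and the containment $C \sdif \bfb \subseteq F$ is either an equality or supplies an edge of $\incgraph{\emptyset,\bfb}{\D}$, placing $F$ in the same component as $\phi(C) = [C \sdif \bfb]$.

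The only real obstacle is notational: one has to keep track of the fact that $\incgraphred{\emptyset,\bfb}{\D}$ is defined on the vertex set $\Ndel{\bfb}{\lk_\Delta \emptyset}$, which equals $\Ndel{\bfb}{\D}$ since $\lk_\Delta \emptyset = \Delta$, and to double-check that the ``reduced'' graph really does coincide with the full one under our hypothesis (which holds precisely because $\Ndelred{\bfb}{\D} = \emptyset$ means no component can meet $\Ndelred{\bfb}{\D}$, so none is discarded). Once these identifications are unpacked, the argument is a direct consequence of \Cref{prop:NdelrelEmptysetEquivalence} together with the elementary chain-extraction step above.
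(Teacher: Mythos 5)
Your proof is correct and follows essentially the same path as the paper's: invoke Proposition~\ref{prop:NdelrelEmptysetEquivalence} to identify $\min_{\subseteq}\Ndel{\bfb}{\D}$ and obtain $\Ndelred{\bfb}{\D}=\emptyset$, note that $\varphi$ is then a bijection since $\bfb\subseteq C$ lets one recover $C$ from $C\sdif\bfb$, and conclude surjectivity of $\phi$ from the fact that every component of the (unreduced, since $\Ndelred{\bfb}{\D}=\emptyset$) graph contains a minimal vertex. Your surjectivity step is slightly more explicit than the paper's one-line appeal, but it is the same argument.
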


\begin{proof}
  It follows from Proposition~\ref{prop:NdelrelEmptysetEquivalence} that the codomain of $\varphi$ coincides with the set
  \[
  \min_{\subseteq} \Ndel{\bfb}{\D}=  \Set{C \sdif \bfb \sodass C \in \matroidrel{C}{\Delta}{\bfb},~~ \bfb\subseteq C}.
  \]  
  Thus the first map  is bijective since $\bfb$ is fully contained in every $C \in \matroidrel{C}{\Delta}{\bfb}$.  
Therefore, we can extend $\varphi$ by sending elements of $\min_{\subseteq} \Ndel{\bfb}{\D}$ to their graph-component in $\incgraph{\emptyset,\bfb}{\Delta}$. 
From \Cref{prop:NdelrelEmptysetEquivalence} it also follows that $\Ndelred{\bfb}{\D}$ is the empty set. 
Thus $\incgraph{\emptyset,\bfb}{\Delta} = \incgraphred{\emptyset,\bfb}{\Delta}$ and we can choose $\pathcomponents{\incgraphred{\emptyset,\bfb}{\Delta}}$ as the codomain of the extension. The map $\phi$ is surjective since any component contains at least one element in $\min_{\subseteq} \Ndel{\bfb}{\D}$.
\end{proof}

\begin{proposition}\label{prop:BoundOfDegreewiseDimensionOfFirstCCMOne}
For every simplicial complex \D\ and every $\bfb \in \D$ we have
\begin{align*}
  \dim  \firstcotangentcohomology{-\bfb}{\Delta} \leqslant \min
  \Set{%
  \card{\big(\,\matroid{C}{\lk_\D \bfb} \cap (\D\sdif \bfb)\,\big)},~~
  \card{\big(\,\matroid{B}{\D\sdif \bfb} \sdif \lk_\D \bfb\,\big)}}.
\end{align*}
If $\card{\bfb} = 1$, then 1 may be subtracted from the right-hand side when the latter is positive. 
\end{proposition}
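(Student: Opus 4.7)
The plan is to apply \Cref{thm:TiAsRelativeCohomology} together with \eqref{eq:dimT1FromGraph}, taking $A=\emptyset$: these identify $\dim_\K T^1_{-\bfb}(\Delta)$ with the number of connected components of $\incgraphred{\emptyset,\bfb}{\Delta}$ (minus one in the case $\card{\bfb}=1$). I would then prove the two inequalities separately by constructing injections from the set of components of $\incgraphred{\emptyset,\bfb}{\Delta}$ into each of the two sets in the minimum.

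For the first bound I would first establish the set equality
\[
\min_{\subseteq} \Ndel{\bfb}{\Delta} \;=\; \matroid{C}{\lk_\Delta \bfb} \cap (\Delta \sdif \bfb).
\]
The inclusion $\subseteq$ is given by \Cref{lemma:MinimalElementsofNdelAndNdelred}\ref{item:lme1}, and the reverse is direct: if $F$ is a minimal nonface of $\lk_\Delta \bfb$ lying in $\Delta\sdif\bfb$, then $F\cup\bfb\notin\Delta$ while $F'\cup\bfb\in\Delta$ for every $F'\subsetneq F$, so $F\in\min_{\subseteq}\Ndel{\bfb}{\Delta}$. Given any vertex $F$ of a component of $\incgraph{\emptyset,\bfb}{\Delta}$, an inclusion-minimal element of $\{F'\in\Ndel{\bfb}{\Delta}\sodass F'\subseteq F\}$ is itself minimal in $\Ndel{\bfb}{\Delta}$ and is joined to $F$ by a single edge, so every component of $\incgraph{\emptyset,\bfb}{\Delta}$, and therefore every component of the subgraph $\incgraphred{\emptyset,\bfb}{\Delta}$, contains at least one minimal element of $\Ndel{\bfb}{\Delta}$. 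Since distinct components are vertex-disjoint, this yields the first inequality.

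For the second bound I would show that every component of $\incgraphred{\emptyset,\bfb}{\Delta}$ contains an element of $\matroid{B}{\Delta\sdif\bfb}\sdif\lk_\Delta\bfb$. Starting from any vertex $F$ of such a component, pick a facet $F'$ of $\Delta\sdif\bfb$ with $F\subseteq F'$ and add the elements of $F'\sdif F$ one by one, producing a chain $F=F_0\subsetneq F_1\subsetneq\cdots\subsetneq F_k=F'$. Each $F_i$ lies in $\Delta$, is disjoint from $\bfb$, and satisfies $F_i\cup\bfb\supseteq F\cup\bfb\notin\Delta$ by upward-closure of nonfaces, so $F_i\in\Ndel{\bfb}{\Delta}$ and the chain is a path in $\incgraph{\emptyset,\bfb}{\Delta}$. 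Thus $F'$ lies in the same component as $F$, it avoids $\Ndelred{\bfb}{\Delta}$, and $F'\cup\bfb\notin\Delta$ forces $F'\notin\lk_\Delta\bfb$. Distinct components give distinct facets, proving the second inequality.

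The adjustment for $\card{\bfb}=1$ is already built into \eqref{eq:dimT1FromGraph}: if the right-hand minimum is positive, the $-1$ version descends from the component-count bounds above; if it equals $0$, then the same arguments show $\Ndel{\bfb}{\Delta}=\emptyset$ and $\dim_\K T^1_{-\bfb}(\Delta)=0$ trivially, so no subtraction is required. The main point of technical care is the identification of $\min_{\subseteq}\Ndel{\bfb}{\Delta}$ with $\matroid{C}{\lk_\Delta\bfb}\cap(\Delta\sdif\bfb)$; the injection producing facets is then routine bookkeeping based on the upward-closure of nonfaces.
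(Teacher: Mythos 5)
Your proof takes the same route as the paper's: both reduce to \eqref{eq:dimT1FromGraph} with $A=\emptyset$, identify the inclusion-minimal (resp.\ maximal) elements of $\Ndel{\bfb}{\D}$ with $\matroid{C}{\lk_\D\bfb}\cap(\D\sdif\bfb)$ (resp.\ $\matroid{B}{\D\sdif\bfb}\sdif\lk_\D\bfb$), and then observe that every connected component of $\incgraphred{\emptyset,\bfb}{\D}$ contains at least one of each.

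One step is misattributed, however. You invoke \Cref{lemma:MinimalElementsofNdelAndNdelred}\,\ref{item:lme1} for the inclusion $\min_{\subseteq}\Ndel{\bfb}{\D}\subseteq\matroid{C}{\lk_\D\bfb}\cap(\D\sdif\bfb)$, but that lemma only yields $\min_{\subseteq}\Ndel{\bfb}{\D}\subseteq\Set{C\sdif\bfb\sodass C\in\matroid{C}{\D},\ C\cap\bfb\neq\emptyset}$, which is in general a \emph{strictly larger} set than the one you need; in the situation of \Cref{rem:iii=/=>iInProp} the lemma's set is $\{3,4,34\}$ while $\matroid{C}{\lk_\D\bfb}\cap(\D\sdif\bfb)=\{3,4\}$. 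The inclusion you want is still true and requires only the short direct argument the paper uses: if $X\in\min_{\subseteq}\Ndel{\bfb}{\D}$, then every $X'\subsetneq X$ lies in $\D\sdif\bfb$ but not in $\Ndel{\bfb}{\D}$, hence $X'\in\lk_\D\bfb$, so $X$ is a minimal nonface of $\lk_\D\bfb$. With that substitution your proof coincides with the paper's; the chain you build for the second bound is more than needed (a single edge $F\subseteq F'$ already suffices) but is harmless, and your handling of the $\card{\bfb}=1$ case is correct.
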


\begin{proof}
  Every connected component of $\incgraph{\emptyset,\bfb}{\Delta}$ has at least one vertex that is inclusion-minimal in $\Ndel{\bfb}{\D}$ and one vertex that is inclusion-maximal in $\Ndel{\bfb}{\D}$.  From (\ref{eq:dimT1FromGraph}) we get thus
  \[
    \dim  \firstcotangentcohomology{-\bfb}{\Delta} \leqslant \min \Set{\text{\# of minima in $\incgraph{\emptyset,\bfb}{\Delta}$\,,~~\# of  maxima  in $\incgraph{\emptyset,\bfb}{\Delta}$}}.
  \]
  Recall that by \Cref{rem:Ndel}, as $\bfb\in \D$, we have 
  \(
    \Ndel{\bfb}{\D} =  \D\sdif\str_\D \bfb.
  \)
  This means that 
  \[
F\in\Ndel{\bfb}{\D} \iff 
F\in \D\sdif \bfb~~\text{and}~~F\notin \lk_\D \bfb.
\]
If $X$ is minimal in $\Ndel{\bfb}{\D}$, then
for every subset $X'\subsetneq X$ we have $X'\notin\Ndel{\bfb}{\D}$. As $X'\in \D\sdif \bfb$ still holds, we get $X'\in\lk_\D \bfb$. This means that
  \[
    X\text{~is minimal in~}\Ndel{\bfb}{\D} \iff X\in \matroid{C}{\lk_\D \bfb} \cap (\D\sdif \bfb).
  \]
  If $Y$ is  maximal in $\Ndel{\bfb}{\D}$, then every $Y'$ with $Y\subsetneq Y'$ we have $Y'\notin\Ndel{\bfb}{\D}$. As $Y'$ still fulfills $Y'\notin\lk_\D \bfb$, we get $Y'\notin \D\sdif \bfb$. Thus $Y$ is a facet of $\D\sdif \bfb$.
  This means that
  \[
    Y\text{~is maximal in~}\Ndel{\bfb}{\D}
    \iff
    Y\in\matroid{B}{\D\sdif \bfb}\sdif\lk_\D \bfb.
  \]
  \vspace{-1em}
\end{proof}
Using the definitions we can restate this bound as follows.
\begin{corollary}\label{prop:BoundOfDegreewiseDimensionOfFirstCCM}
For every simplicial complex $\Delta$ and every $\bfb \in \D$ we have
\begin{align*}
  \dim  \firstcotangentcohomology{-\bfb}{\Delta}
  \leqslant
  \min \Set{\card{\big(\,\matroid{C}{\lk_\D \bfb} \sdif \matroid{C}{\Delta \sdif \bfb}\,\big)},~~ \card{\big(\,\matroid{B}{\Delta\sdif \bfb} \sdif \matroid{B}{\lk_\D \bfb}\,\big)} }\!.
\end{align*}
\end{corollary}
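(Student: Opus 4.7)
My plan is to derive the corollary directly from \Cref{prop:BoundOfDegreewiseDimensionOfFirstCCMOne} by establishing the two set equalities
\begin{align*}
\matroid{C}{\lk_\D \bfb} \cap (\D \sdif \bfb) &= \matroid{C}{\lk_\D \bfb} \sdif \matroid{C}{\D \sdif \bfb},\\
\matroid{B}{\D \sdif \bfb} \sdif \lk_\D \bfb &= \matroid{B}{\D \sdif \bfb} \sdif \matroid{B}{\lk_\D \bfb}.
\end{align*}
Since the previous proposition already supplies the inequality with the left-hand sides above, once these equalities are proved the corollary follows immediately. The only technical point is that the vertex set of $\lk_\D \bfb$ is automatically contained in $[n]\sdif \bfb$, so every element appearing in $\matroid{C}{\lk_\D \bfb}$ or $\matroid{B}{\lk_\D \bfb}$ is already disjoint from $\bfb$.

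For the first equality, I would argue as follows. Take $C\in\matroid{C}{\lk_\D \bfb}$. Then every proper subset $C'\subsetneq C$ lies in $\lk_\D \bfb$, so in particular $C'\in\D$ and, being disjoint from $\bfb$, $C'\in\D\sdif\bfb$. Thus the only question is whether $C$ itself is a face of $\D\sdif \bfb$. If yes, then $C\in\D\sdif\bfb$ and, being a face, $C\notin\matroid{C}{\D\sdif\bfb}$. If no, then all proper subsets of $C$ are faces of $\D\sdif\bfb$ while $C$ is not, so $C\in\matroid{C}{\D\sdif\bfb}$ and simultaneously $C\notin\D\sdif\bfb$. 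These two alternatives are exactly the two sides of the claimed equivalence.

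For the second equality, let $B\in\matroid{B}{\D\sdif\bfb}$. If $B\notin\lk_\D\bfb$ then clearly $B$ is not a facet of $\lk_\D\bfb$, giving one inclusion. Conversely, if $B\in\lk_\D\bfb$ I would show $B$ must actually be a facet of $\lk_\D\bfb$: if some $B'\supsetneq B$ were in $\lk_\D\bfb$, then $B'\cup \bfb\in\D$ and $B'\cap\bfb=\emptyset$, so $B'\in\D\sdif\bfb$, contradicting the maximality of $B$ in $\D\sdif\bfb$. Hence $B\in\lk_\D\bfb$ forces $B\in\matroid{B}{\lk_\D\bfb}$, which is the reverse inclusion.

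I do not anticipate a genuine obstacle here; the whole content is bookkeeping about the definitions of minimal nonface, facet, link, and deletion, together with the observation that faces of $\lk_\D \bfb$ (and of $\D\sdif\bfb$) are intrinsically disjoint from $\bfb$. The main thing to be careful about is not to conflate ``$C\notin\D\sdif\bfb$'' with ``$C$ is a minimal nonface of $\D\sdif\bfb$'': the hypothesis $C\in\matroid{C}{\lk_\D\bfb}$ is what guarantees all proper subsets are in $\D\sdif\bfb$, which bridges the two notions.
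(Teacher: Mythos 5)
Your proposal is correct and matches the paper's intent exactly: the paper derives this corollary from \Cref{prop:BoundOfDegreewiseDimensionOfFirstCCMOne} with the remark that it is a restatement ``using the definitions,'' and your two set identities are precisely the bookkeeping that justifies that restatement. No gaps.
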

\noindent We denote by $\partial F= \Set{A\subsetneq F}$ the boundary of $2^F$ as an abstract simplex.
\begin{proposition}\label{prop:dependentCotangentCohomology}
Let $\Delta$ be a simplicial complex on $[n]$ and $\bfb\in 2^{[n]}\sdif \D$ be a nonface. Then
\begin{equation*}
    \dim \firstcotangentcohomology{-\bfb}{\Delta} = 
    \begin{cases}
        1 &\quad 
        \text{if } \Delta \cong (\Delta \sdif \bfb) * \partial \bfb \text{ and } \card{\bfb} > 1,\\
        0  &
        \quad\text{if otherwise. }
    \end{cases}
\end{equation*}
\end{proposition}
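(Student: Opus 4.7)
The plan is to apply \Cref{thm:TiAsRelativeCohomology} through the connected-components formula \eqref{eq:dimT1FromGraph} and analyze $\Ndel{\bfb}{\D}$ and $\Ndelred{\bfb}{\D}$ directly. Throughout, since $\bfb\notin\D$, \Cref{rem:Ndel} yields $\Ndel{\bfb}{\D}=\D\sdif \bfb$; in particular $\emptyset\in\Ndel{\bfb}{\D}$, so every vertex of $\incgraph{\emptyset,\bfb}{\D}$ is joined to $\emptyset$ by an edge, making the graph connected.

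Suppose first $\card{\bfb}=1$, i.e.\ $\bfb=\{v\}$ is a loop of $\D$. The footnote of \Cref{thm:TiAsRelativeCohomology} gives $\Ndelred{\bfb}{\D}=\emptyset$, hence $\incgraphred{\emptyset,\bfb}{\D}=\incgraph{\emptyset,\bfb}{\D}$ is connected, and \eqref{eq:dimT1FromGraph} (with the $-1$ correction for singletons) yields $\dim T^1_{-\bfb}(\D)=0$. Next suppose $\card{\bfb}>1$ and that $\bfb$ is \emph{not} a minimal nonface. Then some $\bfb'\subsetneq\bfb$ already satisfies $\bfb'\notin\D$, so $\emptyset\cup\bfb'=\bfb'\notin\D$ places $\emptyset$ into $\Ndelred{\bfb}{\D}$; since the unique component of $\incgraph{\emptyset,\bfb}{\D}$ contains $\emptyset$, the subgraph $\incgraphred{\emptyset,\bfb}{\D}$ is empty and $\dim T^1_{-\bfb}(\D)=0$.

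The remaining case is $\card{\bfb}>1$ with $\bfb$ a minimal nonface. Every $\bfb'\subsetneq\bfb$ then lies in $\D$, so $\emptyset\notin\Ndelred{\bfb}{\D}$, and the unique component of $\incgraph{\emptyset,\bfb}{\D}$ survives to $\incgraphred{\emptyset,\bfb}{\D}$ precisely when $\Ndelred{\bfb}{\D}=\emptyset$. The proposition therefore reduces to proving the equivalence
\[
  \Ndelred{\bfb}{\D}=\emptyset \iff \D=(\D\sdif\bfb)*\partial\bfb.
\]
Unpacking, $\Ndelred{\bfb}{\D}=\emptyset$ says that $F\cup\bfb'\in\D$ for every $F\in\D\sdif\bfb$ and every $\bfb'\subsetneq\bfb$, which is exactly the inclusion $(\D\sdif\bfb)*\partial\bfb\subseteq\D$. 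The reverse inclusion follows by decomposing each $G\in\D$ as $(G\sdif\bfb)\sqcup(G\cap\bfb)$ and noting that $G\cap\bfb\subsetneq\bfb$ since $\bfb\notin\D$. The converse implication is immediate from the definition of the join.

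The most delicate point I expect is the bookkeeping of the role of $\emptyset$: it is always a vertex of $\incgraph{\emptyset,\bfb}{\D}$ (so this graph is always connected), but its membership in $\Ndelred{\bfb}{\D}$ depends precisely on whether $\bfb$ is a minimal nonface. Once this trichotomy of singleton, nonminimal, and minimal $\bfb$ is in place, the combinatorial heart of the proof is the clean join characterization in the final case.
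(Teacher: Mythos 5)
Your proof is correct and follows the same backbone as the paper's: use \Cref{rem:Ndel} to identify $\Ndel{\bfb}{\D}=\D\sdif\bfb$, observe that the incidence graph is connected (your observation that every vertex is adjacent to $\emptyset$ is a nice way to see it), reduce to deciding when $\Ndelred{\bfb}{\D}=\emptyset$, and then translate that emptiness into the join decomposition. The one place you diverge is the last step: the paper invokes \Cref{prop:NdelrelEmptysetEquivalence} to rephrase $\Ndelred{\bfb}{\D}=\emptyset$ as a condition on minimal nonfaces, concludes that $\bfb$ is a circuit disjoint from all other circuits, and then matches the circuit sets of $\D$ and $(\D\sdif\bfb)*\partial\bfb$; you instead unpack the definition of $\Ndelred{\bfb}{\D}$ directly, noting that its emptiness literally asserts $(\D\sdif\bfb)*\partial\bfb\subseteq\D$ while the reverse inclusion $\D\subseteq(\D\sdif\bfb)*\partial\bfb$ always holds when $\bfb$ is a nonface. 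Your route is slightly more elementary and self-contained at that spot, since it bypasses the circuit-level reasoning entirely; the paper's route reuses a lemma it needs elsewhere and phrases the answer in the circuit language of the surrounding section. One small tidying remark: your three-way split (singleton, non-minimal nonface, minimal nonface) is correct, but the non-minimal case is actually subsumed by the equivalence you prove afterward, since $\D=(\D\sdif\bfb)*\partial\bfb$ forces $\partial\bfb\subseteq\D$ and hence $\bfb$ minimal; as written you establish $\dim T^1_{-\bfb}(\D)=0$ there without explicitly noting that the join condition fails, which is worth a half-sentence for completeness.
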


\begin{proof}
 \Cref{rem:Ndel} yields that $\Ndel{\bfb}{\D} =\D\sdif \bfb $ and therefore $\incgraph{\emptyset,\bfb}{\Delta}$ is connected. So if $\card{\bfb} = 1$, then $\dim \firstcotangentcohomology{-\bfb}{\Delta} = 0$.
If $\card{\bfb} > 1$, then $\dim \firstcotangentcohomology{-\bfb}{\Delta} = 1$ if and only if $\Ndelred{B}{\D}  = \emptyset$. Otherwise, we have $\dim \firstcotangentcohomology{-\bfb}{\Delta} = 0$. 
By \Cref{prop:NdelrelEmptysetEquivalence} this only happens if $\bfb$ is either fully contained in or disjoint to any minimal nonface of $\Delta$.
Since $\bfb$ is a nonface this means that $\bfb$ itself is a minimal nonface disjoint to any other minimal nonface.
Thus
\[
  \matroid{C}{\Delta} ~=~\matroid{C}{\Delta \sdif \bfb} \cup \Set{\bfb} ~=~ \matroid{C}{\Delta \sdif \bfb} \cup \matroid{C}{\partial \bfb} ~=~ \matroid{C}{(\Delta \sdif \bfb) * \partial \bfb}.
\]
\end{proof}
\section{Computing $T^1$ for matroids}
\label{sec:t1forMatroids}
It turns out that for a matroidal Stanley-Reisner ring $\K[\M ]$ one can compute the dimensions of the graded pieces of $T^1$ by looking at circuits. In a certain sense, simplicial complexes which are not matroids have ``too many'' circuits.
This makes the sets $\Ndelred{\bfb}{\D}$  more complicated than in the nice case of matroids.
These two observations lead to the main result of this section: \Cref{thm:FirstCotangentCohomologyMatroid}.
In Corollary~\ref{FirstCotangentFormulaDeterminesMatroid} we show it is enough to look at $T^1$ in degrees $-e_i$ to determine whether a simplicial complex is a matroid.
We start with two results which are specific to matroids.

\begin{lemma}\label{lem:MaximalElementsOfMatroidalNdelAndNdelred}
Let $\M $ be a matroid, $\bfb \subseteq [n]$ an arbitrary set, and $B_1,B_2 \in \matroid{B}{\M  \sdif \bfb}$ bases of the deletion of $\bfb$. For any $\bfb' \subseteq \bfb$ we get the equivalence
\[
    B_1 \cup \bfb' \in \M  \iff B_2 \cup \bfb' \in \M .
\]
\end{lemma}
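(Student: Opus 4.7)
The plan is to reduce the desired equivalence to an equality of matroid ranks, namely
\[
  \rk{\M}{B_1\cup\bfb'} \;=\; \rk{\M}{B_2\cup\bfb'}.
\]
Once this is established, the claim follows immediately: since $B_i\cap\bfb' \subseteq B_i\cap\bfb = \emptyset$ and $|B_1|=|B_2|$ (bases of the same deletion matroid have equal cardinality), the independence criterion $\rk{\M}{B_i\cup\bfb'} = |B_i| + |\bfb'|$ holds for $i=1$ if and only if it holds for $i=2$.

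The main input is the following claim, which I would prove first: for every basis $B\in\matroid{B}{\M\sdif\bfb}$ and every $\bfb'\subseteq\bfb$,
\[
  \rk{\M}{B\cup\bfb'} \;=\; \rk{\M}{([n]\sdif\bfb)\cup\bfb'}.
\]
The right-hand side does not depend on the choice of $B$, so applying the claim to both $B_1$ and $B_2$ gives the rank equality above. To prove the claim, I would begin with the observation that for any $v\in([n]\sdif\bfb)\sdif B$, the set $B\cup\{v\}\subseteq [n]\sdif\bfb$ strictly contains the deletion-basis $B$ and is therefore dependent in $\M$, so $\rk{\M}{B\cup\{v\}} = \rk{\M}{B}$. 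By the standard matroid fact that an element which is rank-neutral over a set remains rank-neutral over every superset (a direct consequence of submodularity of $\rank_\M$), we conclude that $\rk{\M}{B\cup\bfb'\cup\{v\}} = \rk{\M}{B\cup\bfb'}$. Adjoining the elements of $([n]\sdif\bfb)\sdif B$ to $B\cup\bfb'$ one at a time then yields the displayed identity.

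I expect no serious obstacle; the argument is a standard closure/rank manipulation, and the only careful step is the propagation of rank-neutrality from $B$ to $B\cup\bfb'$, which is classical matroid theory. An alternative route through the strong circuit elimination axiom \ref{item:circEl1} would in principle work, but it requires constructing a circuit in $B_2\cup\bfb'$ from one in $B_1\cup\bfb'$ via repeated eliminations, and loses the manifest symmetry between $B_1$ and $B_2$ that the rank formulation makes transparent.
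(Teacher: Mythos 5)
Your argument is correct, and it takes a genuinely different route from the paper's. The paper proves the claim by induction on $\card{\bfb}$, reducing to $\bfb' = \bfb$ and using the independent-set exchange axiom~\ref{item:indep2} to transfer one element of $\bfb$ from $B_1$ to $B_2$ at a time; the argument is inherently asymmetric (one implication is proved and the other follows by swapping roles). You instead isolate the key invariant $\rk{\M}{([n]\sdif\bfb)\cup\bfb'}$ and show it equals $\rk{\M}{B\cup\bfb'}$ for every deletion-basis $B$, using the standard fact that an element in the closure of $B$ stays rank-neutral over any superset of $B$ (submodularity). Since bases of $\M\sdif\bfb$ have equal cardinality and are disjoint from $\bfb'$, independence of $B_i\cup\bfb'$ is the rank condition $\rk{\M}{B_i\cup\bfb'} = \card{B_i}+\card{\bfb'}$, and the equivalence drops out. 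What the rank/closure route buys is manifest symmetry and no induction; what the paper's exchange-axiom route buys is that it stays entirely within the language of independent sets, which fits better with the rest of the paper (where rank and closure play essentially no role). Both are complete and correct.
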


\begin{proof}
  We can assume without loss of generality that $\bfb' = \bfb$ and prove the statement by induction over the cardinality $\card{\bfb}$. Considering the symmetry of the conclusion, we only show the implication 
\begin{equation}\label{eq:IndependentSetUnionImplication}
B_1 \cup \bfb \in \M  \implies B_2 \cup \bfb \in \M .
\end{equation}
If $\bfb = \emptyset$, then we obtain an implication between two tautologies. Assume now that $\card{\bfb}>0$ and that (\ref{eq:IndependentSetUnionImplication})  holds for any subset of $[n]$ of cardinality $(\card{\bfb})-1$.
If $B_1 \cup \bfb \in \M $, then, since $\card{B_2}=\card{B_1}<\card{(B_1\cup\bfb)}$,  we can use the independent set exchange axiom to find a $v \in B_1 \cup \bfb$, with $v\notin B_2$, such that $B_2 \cup \Set{v} \in \M $.
If $v\notin \bfb$, then $B_2 \cup \Set{v}$ would be an independent set of $\M \sdif \bfb$ that properly contains a basis. So both $B_1 \cup \Set{v}$ and $B_2 \cup \Set{v}$ with $v\in\bfb$ are  bases of $\M \sdif (\bfb\sdif \Set{v})$. Since $\card{(\bfb\sdif\set{v})}=(\card{\bfb})-1$, the induction hypothesis implies (\ref{eq:IndependentSetUnionImplication}).
\end{proof}

\begin{corollary}\label{coro:NdelredOfMatroids}
Let $\M $ be a matroid and $\bfb \subseteq [n]$. 
\begin{enumerate}[label=(\roman*),font=\upshape]
\item\label{item:lem42.1} If $\Ndel{\bfb}{\M } \neq \emptyset$, then $\matroid{B}{\M  \sdif \bfb} \subseteq \Ndel{\bfb}{\M }$.
\item\label{item:lem42.2} If $\Ndelred{\bfb}{\M } \neq \emptyset$, then $ \matroid{B}{\M  \sdif \bfb} \subseteq \Ndelred{\bfb}{\M }$.
\item\label{item:lem42.3} If $\Ndelred{\bfb}{\M } \neq \emptyset$, then the inclusion-maximal elements in $\Ndel{\bfb}{\M }$ and $\Ndelred{\bfb}{\M }$ are the same.
\end{enumerate}
\end{corollary}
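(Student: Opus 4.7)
The overall strategy is to use Lemma~\ref{lem:MaximalElementsOfMatroidalNdelAndNdelred} as the workhorse: it says that for any two bases $B_1, B_2 \in \matroid{B}{\M \sdif \bfb}$ and any $\bfb' \subseteq \bfb$, the equivalence $B_1 \cup \bfb' \in \M \iff B_2 \cup \bfb' \in \M$ holds. Thus nonfaceness of $B \cup \bfb'$ transfers uniformly across all bases of $\M \sdif \bfb$. With this tool in hand, parts (i) and (ii) both become a single extension argument, and (iii) follows from a short maximality observation.

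For (i), I would take any $F \in \Ndel{\bfb}{\M}$. Since $F \in \M$ and $F \cap \bfb = \emptyset$, $F$ lies in $\M \sdif \bfb$ and extends to some basis $B_1 \in \matroid{B}{\M \sdif \bfb}$. Monotonicity of nonfaceness gives $B_1 \cup \bfb \notin \M$ (since $F \cup \bfb \subseteq B_1 \cup \bfb$), and then Lemma~\ref{lem:MaximalElementsOfMatroidalNdelAndNdelred} applied with $\bfb' = \bfb$ yields $B_2 \cup \bfb \notin \M$ for every basis $B_2 \in \matroid{B}{\M \sdif \bfb}$; combined with $B_2 \cap \bfb = \emptyset$ this places every such $B_2$ in $\Ndel{\bfb}{\M}$. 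For (ii), the same extension produces \emph{both} $B_1 \cup \bfb \notin \M$ and $B_1 \cup \bfb' \notin \M$, where $\bfb' \subsetneq \bfb$ is a witness coming from the definition of $\Ndelred$; Lemma~\ref{lem:MaximalElementsOfMatroidalNdelAndNdelred} then propagates both non-memberships simultaneously, placing every basis of $\M \sdif \bfb$ in $\Ndelred{\bfb}{\M}$.

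For (iii), the key observation---and the only place where some care is needed---is that every inclusion-maximal element of $\Ndel{\bfb}{\M}$ is a basis of $\M \sdif \bfb$: if it were not, one could strictly enlarge it within $\M \sdif \bfb$, and the enlarged set would still lie in $\Ndel{\bfb}{\M}$ by monotonicity of nonfaceness. The identical argument, carrying along the witness $\bfb'$, shows that inclusion-maximal elements of $\Ndelred{\bfb}{\M}$ are also bases of $\M \sdif \bfb$. Under the hypothesis $\Ndelred{\bfb}{\M} \neq \emptyset$, parts (i) and (ii) apply, giving $\matroid{B}{\M \sdif \bfb} \subseteq \Ndelred{\bfb}{\M} \subseteq \Ndel{\bfb}{\M}$; since bases are inclusion-maximal in $\M \sdif \bfb$ and hence in any subset of it, the inclusion-maximal elements of $\Ndel{\bfb}{\M}$ and $\Ndelred{\bfb}{\M}$ both equal $\matroid{B}{\M \sdif \bfb}$ and in particular coincide. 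The main conceptual hurdle is seeing this clean picture; once the basis characterization of maximality is isolated, Lemma~\ref{lem:MaximalElementsOfMatroidalNdelAndNdelred} and the matroid axioms do the rest.
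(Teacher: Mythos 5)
Your proof is correct and takes essentially the same approach as the paper: extend a witness element of $\Ndel{\bfb}{\M}$ (respectively $\Ndelred{\bfb}{\M}$, with its witness $\bfb'$) to a basis of $\M\sdif\bfb$, use Lemma~\ref{lem:MaximalElementsOfMatroidalNdelAndNdelred} to transfer the nonface property to every basis of $\M\sdif\bfb$, and observe that since both $\Ndel{\bfb}{\M}$ and $\Ndelred{\bfb}{\M}$ are closed under taking supersets within $\M\sdif\bfb$, their maximal elements are exactly the bases, which by (i) and (ii) lie in both sets.
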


\begin{proof}
\ref{item:lem42.1} Let $F\in\Ndel{\bfb}{\M }$. This means $F \in \M \sdif \bfb$ and $F \cup \bfb \notin \M $.  Thus, all independent sets $X$ that contain  $F$ have the property that $X \cup \bfb \notin \M $. In particular, all bases of $\M \sdif\bfb$ that contain $F$ are in $\Ndel{\bfb}{\M }$.
By
\Cref{lem:MaximalElementsOfMatroidalNdelAndNdelred} all bases of $\matroid{B}{\M \sdif \bfb}$ must also be in $\Ndel{\bfb}{\M }$.\\[1ex]
\ref{item:lem42.2} If $F\in\Ndelred{\bfb}{\M }$, then there exists $\bfb'\subsetneq\bfb$ such that $F\cup \bfb'\notin \M $. We then conclude by the same argument as in the previous point.  \\[1ex]
\ref{item:lem42.3} We have $\Ndelred{\bfb}{\M }\subseteq\Ndel{\bfb}{\M }\subseteq \M \sdif \bfb$, with the first two  closed under taking supersets within the third. So the set of  maximal elements in each is exactly the intersection with $\matroid{B}{\M \sdif \bfb}$ and we conclude by the previous two points.
\end{proof}

\begin{lemma}\label{lemma:forgottenaxiom}
If $\Delta$ is a nonempty  simplicial complex, then the following are equivalent.
\begin{enumerate}[label=(\roman*),font=\upshape,
leftmargin = 2.5em, labelwidth=2em, itemindent = 0em, labelindent = 0em, listparindent = 3em, itemsep =1ex  ]
    \item\label{item:fax1} $\D$ is a matroid.
    \item\label{item:fax2} For all $\bfb\subseteq[n]$, if $\bfb\cap C \in\Set{\emptyset, \bfb}$ for all $C\in\matroid{C}{\D}$, then every element of $\Ndel{\bfb}{\D}$ contains a unique inclusion-minimal  element of $\Ndel{\bfb}{\D}$.        
    \item\label{item:fax3} For all $v \in [n]$, any element of $\Ndel{v}{\D}$ contains a unique inclusion-minimal  element of $\Ndel{v}{\D}$.   
\end{enumerate}
\end{lemma}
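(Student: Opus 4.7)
The plan is to prove the cyclic implications \textup{(i)} $\Rightarrow$ \textup{(ii)} $\Rightarrow$ \textup{(iii)} $\Rightarrow$ \textup{(i)}. The middle implication is immediate: taking $\bfb=\{v\}$ the hypothesis $\{v\}\cap C\in\{\emptyset,\{v\}\}$ is trivially satisfied for every $C\in\matroid{C}{\Delta}$, so \textup{(iii)} is just the specialization of \textup{(ii)} to singletons.

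For \textup{(i)} $\Rightarrow$ \textup{(ii)}, I would rely on the strong circuit elimination axiom \ref{item:circEl1}. Assume $\Delta=\M$ is a matroid and fix $\bfb$ as in the hypothesis of \textup{(ii)}. Since every circuit either contains or is disjoint from $\bfb$, \Cref{prop:NdelrelEmptysetEquivalence}\,\ref{item:iiiProp32} identifies the minimal elements of $\Ndel{\bfb}{\M}$ with $\{C\sdif\bfb : \bfb\subseteq C\in\matroid{C}{\M}\}$. Given $F\in\Ndel{\bfb}{\M}$, existence of a minimal element below $F$ is straightforward: $F\cup\bfb$ must contain some circuit, this circuit must meet $\bfb$ (otherwise it would already sit inside the face $F$), and therefore by hypothesis it contains $\bfb$, giving $C\sdif\bfb\subseteq F$. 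For uniqueness, suppose two distinct circuits $C_1,C_2\supseteq\bfb$ satisfy $C_j\sdif\bfb\subseteq F$. Picking $i\in\bfb\subseteq C_1\cap C_2$ and any $v\in C_1\sdif C_2$, strong circuit elimination produces $C''\in\matroid{C}{\M}$ with $v\in C''\subseteq(C_1\cup C_2)\sdif\{i\}$. The missing element $i\in\bfb$ forces $\bfb\nsubseteq C''$, hence by the hypothesis on $\bfb$ we get $C''\cap\bfb=\emptyset$, so $C''\subseteq(C_1\cup C_2)\sdif\bfb\subseteq F$, contradicting $F\in\M$.

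For \textup{(iii)} $\Rightarrow$ \textup{(i)}, my strategy is to derive the weak circuit elimination axiom for $\matroid{C}{\Delta}$ from \textup{(iii)}; combined with the fact that $\matroid{C}{\Delta}$ is an antichain not containing $\emptyset$ (automatic for a nonempty simplicial complex), this is exactly the standard circuit characterization of matroids (\cite[Proposition~1.1.1]{OXLEY}) and thus forces $\Delta$ to be a matroid. So take distinct $C_1,C_2\in\matroid{C}{\Delta}$ with $i\in C_1\cap C_2$ and set $H=(C_1\cup C_2)\sdif\{i\}$. If $H$ were a face, then $H\in\Ndel{i}{\Delta}$ and $H$ would contain both $C_1\sdif\{i\}$ and $C_2\sdif\{i\}$, which are distinct (since $i\in C_1\cap C_2$) and both inclusion-minimal in $\Ndel{i}{\Delta}$ by \Cref{lemma:MinimalElementsofNdelAndNdelred}\,\ref{item:lme1}; this violates \textup{(iii)}. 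Therefore $H\notin\Delta$, so $H$ contains a minimal nonface $C''\subseteq(C_1\cup C_2)\sdif\{i\}$, exactly as required for weak circuit elimination.

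The main obstacle is this final implication: \textup{(iii)} only constrains the sets $\Ndel{v}{\Delta}$ for singletons, so at first glance it seems far weaker than the matroid property. The key observation that unlocks the argument is that any potential failure of weak circuit elimination at a triple $(C_1,C_2,i)$ would exhibit $(C_1\cup C_2)\sdif\{i\}$ as a face sitting above two distinct inclusion-minimal elements of $\Ndel{i}{\Delta}$, and this is precisely the configuration forbidden by \textup{(iii)}.
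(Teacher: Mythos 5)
Your cyclic-implication scheme (i)\,$\Rightarrow$\,(ii)\,$\Rightarrow$\,(iii)\,$\Rightarrow$\,(i) and all three arguments line up with the paper's own proof; this is essentially the same approach, with the only stylistic deviation being your use of the strong circuit elimination axiom with a chosen $v\in C_1\sdif C_2$ in (i)\,$\Rightarrow$\,(ii), where the paper gets by with just (weak) elimination of the common element.

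One citation to fix: in (iii)\,$\Rightarrow$\,(i) you invoke \Cref{lemma:MinimalElementsofNdelAndNdelred}\,\ref{item:lme1} to conclude that $C_1\sdif\{i\}$ and $C_2\sdif\{i\}$ are inclusion-minimal in $\Ndel{i}{\Delta}$, but that lemma only gives the one-sided containment $\min_\subseteq\Ndel{\bfb}{\Delta}\subseteq\{C\sdif\bfb:\dots\}$; it does not assert that every set of that form is minimal. What you actually need is the reverse containment, which is \Cref{prop:NdelrelEmptysetEquivalence}\,\ref{item:iiiProp32} (its hypothesis is automatic for singletons), and this is precisely what the paper cites at that point. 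Alternatively it is a one-line check: if $X\subsetneq C_j\sdif\{i\}$ then $X\cup\{i\}\subsetneq C_j$, hence $X\cup\{i\}\in\Delta$ by minimality of the circuit $C_j$, so $X\notin\Ndel{i}{\Delta}$. With that reference corrected the proof is complete.
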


\begin{proof}
\ref{item:fax1} $\Rightarrow$ \ref{item:fax2} The statement is trivial for $\bfb=\emptyset$.
If $\bfb \notin \Delta$,
then by \Cref{rem:Ndel} we get $\Ndel{\bfb}{\D} = \D \sdif \bfb$ which contains a unique inclusion-minimal set: $\emptyset$. 
Thus, it suffices to consider the case  $\bfb\in\D$.
 So for all circuits $C \in \matroid{C}{\Delta}$ either $\bfb \subsetneq C$ or $\bfb \subseteq [n] \sdif C$.
By \Cref{prop:NdelrelEmptysetEquivalence} we have 
\[
  \min_{\subseteq} \Ndel{\bfb}{\D} = \Set{C \sdif \bfb \sodass C \in \matroid{C}{\Delta},~~ \bfb \subseteq C}.
\]
Assume that $F \in \Ndel{\bfb}{\D}$ contains two distinct minimal elements of $\Ndel{\bfb}{\D}$: $C_1\sdif \bfb$ and $C_2\sdif \bfb$.  Because $(C_1\sdif\bfb)\cup (C_2\sdif \bfb) \subseteq F\in\D$ and  the union with $\bfb$ contains circuits, we have that
\[
  (C_1\sdif\bfb)\cup (C_2\sdif \bfb) \in \Ndel{\bfb}{\D}.
\]
Let $v\in\bfb\subseteq C_1\cap C_2$.  
By the circuit exchange axiom~\ref{item:circEl1} we find a circuit $C_3 \subseteq (C_1 \cup C_2 )\sdif \set{v}$. As $v\notin C_3$, we have $\bfb\nsubseteq C_3$. We also have  $C_3 \nsubseteq (C_1\sdif\bfb) \cup (C_2\sdif\bfb)$ since the latter is a face of \D. So we must also have $C_3\cap\bfb\neq\emptyset$.
This contradicts that all circuits of $\Delta$ are either disjoint to or contain $\bfb$.\\[1ex]
\ref{item:fax2} $\Rightarrow$ \ref{item:fax3} All singletons are either fully contained in or disjoint to a set.\\[1ex]
\ref{item:fax3} $\Rightarrow$ \ref{item:fax1} Assume that $\Delta$ is not a matroid. Then there are at least two minimal nonfaces $C_1, C_2 \in \matroid{C}{\Delta}$ and an element $c \in C_1 \cap C_2$ such that there exists no minimal nonface contained in $(C_1 \cup C_2) \sdif \set{c}$.
In other words $(C_1 \cup C_2) \sdif \set{c}$ is a face. 
But then $(C_1 \cup C_2) \sdif \set{c} \in \Ndel{c}{\D}$ does contain two sets $C_1 \sdif\set{c}, C_2 \sdif\set{c} \in \Ndel{c}{\D}$. Both $C_1 \sdif\set{c}$ and $C_2 \sdif\set{c}$ are minimal under inclusion by \Cref{prop:NdelrelEmptysetEquivalence}.\ref{item:iiiProp32} and don't contain each other.
\end{proof}

\begin{remark}
The implication \ref{item:fax1} $\Rightarrow$ \ref{item:fax3} is given in \cite[Proposition 1.1.6]{OXLEY}. It turns out that this implication is an equivalence.
\end{remark}

\begin{proposition}\label{prop:MatroidalIncgraphBijection}
Let $\Delta$ be a simplicial complex on $[n]$. The following are equivalent.
\begin{enumerate}[label=(\roman*),font=\upshape]
    \item\label{item:pmi1} $\Delta$ is a matroid.
    \item\label{item:pmi2} For every $\bfb \subseteq [n]$ such that $\Ndelred{\bfb}{\D} = \emptyset$ the minimal elements of $\Ndel{\bfb}{\D}$ with respect to inclusion are unique in their connected component of $\incgraph{\emptyset,\bfb}{\Delta}$.
    \item\label{item:pmi3} For every $v \in [n]$ the minimal elements of $\Ndel{v}{\D}$ with respect to inclusion are unique in their connected component of $\incgraph{\emptyset,v}{\Delta}$.
    \item\label{item:pmi4} For every nonempty $\bfb \subseteq [n]$ such that $\Ndelred{\bfb}{\D} = \emptyset$ the following map to  is injective
      \[
        \phi: \matroidrel{C}{\Delta}{\bfb} \longrightarrow \pathcomponents{\incgraphred{\emptyset,\bfb}{\Delta}},\quad \ C \mapsto [C \sdif \bfb].
        \]
      \item\label{item:pmi5} For every nonempty $\bfb \subseteq [n]$ such that $\Ndelred{\bfb}{\D} = \emptyset$ the following map is bijective
        \[
          \phi: \matroidrel{C}{\Delta}{\bfb} \longrightarrow \pathcomponents{\incgraphred{\emptyset,\bfb}{\Delta}},\quad \ C \mapsto [C \sdif \bfb].
        \]          
\end{enumerate}
\end{proposition}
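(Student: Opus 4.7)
The plan is to prove the equivalences by first handling the easy links $\text{(iv)} \Leftrightarrow \text{(v)}$, $\text{(ii)} \Leftrightarrow \text{(iv)}$, and $\text{(ii)} \Rightarrow \text{(iii)}$, then closing the loop with $\text{(iii)} \Rightarrow \text{(i)} \Rightarrow \text{(ii)}$, leveraging \Cref{lemma:forgottenaxiom} and \Cref{lemma:minimalelementsofmatroidalNdel} throughout. The key combinatorial fact to keep in mind is that under the hypothesis $\Ndelred{\bfb}{\D}=\emptyset$ one has, by \Cref{prop:NdelrelEmptysetEquivalence}\,\ref{item:iiiProp32} combined with \Cref{lemma:minimalelementsofmatroidalNdel}, an honest bijection $C\mapsto C\sdif \bfb$ between $\matroidrel{C}{\Delta}{\bfb}$ and $\min_{\subseteq}\Ndel{\bfb}{\D}$.

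First, \Cref{lemma:minimalelementsofmatroidalNdel} provides unconditional surjectivity of $\phi$, so $\text{(iv)} \Leftrightarrow \text{(v)}$ is immediate. For $\text{(ii)} \Leftrightarrow \text{(iv)}$, under the bijection above $\phi$ factors as the map sending a minimal element of $\Ndel{\bfb}{\D}$ to its connected component in $\incgraphred{\emptyset,\bfb}{\Delta}$. Every component must contain at least one inclusion-minimal vertex, obtained by walking down any descending chain of proper inclusions; hence injectivity of $\phi$ is the same as saying that no two distinct minima share a component, which combined with the "at least one" remark gives exactly (ii). The step $\text{(ii)} \Rightarrow \text{(iii)}$ is trivial since for a singleton $v$ the only proper subset of $\{v\}$ is $\emptyset$, so $\Ndelred{v}{\D}=\emptyset$ automatically.

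For $\text{(iii)} \Rightarrow \text{(i)}$, I would show that (iii) implies condition \ref{item:fax3} of \Cref{lemma:forgottenaxiom}: if some $F\in\Ndel{v}{\D}$ contained two distinct inclusion-minima $F_1,F_2$, then the edges $\{F_1,F\}$ and $\{F_2,F\}$ in $\incgraph{\emptyset,v}{\Delta}$ would place $F_1$ and $F_2$ in the same component, contradicting (iii); \Cref{lemma:forgottenaxiom} then yields that $\Delta$ is a matroid. For $\text{(i)} \Rightarrow \text{(ii)}$, given a matroid $\Delta$ and $\bfb$ with $\Ndelred{\bfb}{\D}=\emptyset$, \Cref{prop:NdelrelEmptysetEquivalence} says every circuit is disjoint from or contains $\bfb$, so \Cref{lemma:forgottenaxiom}\,\ref{item:fax2} furnishes a well-defined map $m:\Ndel{\bfb}{\D}\to \min_{\subseteq}\Ndel{\bfb}{\D}$ sending each face to the unique minimum it contains. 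Along any chain $F_1=G_0,G_1,\ldots,G_k=F_2$ of comparable elements, uniqueness forces $m(G_i)=m(G_{i+1})$ (the minimum living inside the smaller is also a minimum inside the larger), so $m$ is constant on each connected component; applied to two minima $F_1,F_2$ in the same component this gives $F_1=m(F_1)=m(F_2)=F_2$.

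The main obstacle is precisely this last step, $\text{(i)} \Rightarrow \text{(ii)}$: the uniqueness statement from \Cref{lemma:forgottenaxiom} must be propagated along arbitrary zig-zag chains of inclusions, not just monotone ones, which is what the component-invariant $m$ accomplishes. Everything else is bookkeeping on the bijection $C \mapsto C \sdif \bfb$ and the trivial observation that singletons force $\Ndelred{v}{\D}$ to vanish.
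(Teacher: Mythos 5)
Your proof is correct and follows essentially the same route as the paper: the same cycle of implications, with (iv)$\Leftrightarrow$(v) from the surjectivity in Lemma~\ref{lemma:minimalelementsofmatroidalNdel}, (ii)$\Leftrightarrow$(iv) from the bijection $C \mapsto C\sdif\bfb$, and both directions between being a matroid and the uniqueness-of-minima condition driven by Lemma~\ref{lemma:forgottenaxiom}. The only stylistic difference is in (i)$\Rightarrow$(ii), where the paper informally ``modifies'' a zig-zag path to produce a maximal set containing two minima, while your locally constant map $m$ sending each element of $\Ndel{\bfb}{\D}$ to its unique contained minimum expresses the same idea a bit more cleanly.
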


\begin{proof}
  \ref{item:pmi1} $\Rightarrow$ \ref{item:pmi2} Assume there exists a $\bfb\in [n]$ with $\Ndelred{\bfb}{\D}\neq \emptyset$ such that there are different minimal elements  in some component of $\incgraph{\emptyset,\bfb}{\Delta}$. By \Cref{lemma:MinimalElementsofNdelAndNdelred} these have the form $C_1\sdif \bfb$ and $C_2\sdif\bfb$ with $C_1,C_2\in\matroid{C}{\D}$ and $C_1\cap \bfb\neq \emptyset\neq C_2\cap \bfb$.
Every path in $\incgraph{\emptyset, \bfb}{\Delta}$ connecting $C_1\sdif \bfb$ and $C_2\sdif\bfb$ can be modified such that each inclusion-ascending chain ends at a maximal element, and each inclusion-descending chain ends at a minimal element. Thus, in any connected component with distinct minimal elements, there is at least one maximal set containing two minimal sets.
Therefore, without loss of generality, we can assume $C_1\sdif\bfb$ and $C_2\sdif\bfb$ to be two minimal sets  contained in a common set $F\in\Ndel{\bfb}{\D}$.
As \D\ is a matroid and $\Ndelred{\bfb}{\D} = \emptyset$, all sets in $\Ndel{\bfb}{\D}$ contain a unique minimal set by \Cref{lemma:forgottenaxiom}~\ref{item:fax2} - a contradiction. \\[1ex]
\ref{item:pmi2} $\Rightarrow$ \ref{item:pmi3} Singletons  are all either contained in or disjoint to sets.\\[1ex]
\ref{item:pmi3} $\Rightarrow$ \ref{item:pmi1} Note that the inclusion-minimal elements of the graph $\incgraph{\emptyset,v}{\Delta}$ are exactly the minimal elements of  $\Ndel{v}{\D}$. Thus, the implication follows from  \Cref{lemma:forgottenaxiom}.\\[1ex]
\ref{item:pmi2} $\Leftrightarrow$ \ref{item:pmi4} The map $\phi$ sends elements of $\matroidrel{C}{\Delta}{\bfb}$ to the connected component of minimal elements of $\Ndel{\bfb}{\D}$. Therefore, $\phi$ being injective means exactly that those elements are unique.\\[1ex]
\ref{item:pmi4} $\Leftrightarrow$ \ref{item:pmi5} The map $\phi$ is always surjective by \Cref{lemma:minimalelementsofmatroidalNdel}.
\end{proof}

\begin{remark}
Without much effort one could also prove that the conditions \ref{item:pmi4} and \ref{item:pmi5} of \Cref{prop:MatroidalIncgraphBijection} have equivalent variants with singletons instead of sets $\bfb \subseteq [n]$ such that $\Ndelred{\bfb}{\D} = \emptyset$.
\end{remark}
 
Having considered for any matroid $\M$  both  the case $\Ndelred{\bfb}{\M } = \emptyset$ and the case $\Ndelred{\bfb}{{\M }} \neq \emptyset$, we can prove the first theorem of this paper. 

\begin{theorem}\label{thm:FirstCotangentCohomologyMatroid}
Let $\D$ be a simplicial complex on $[n]$. Then \D\ is a matroid if and only if the following  formula holds for all $\bfa-\bfb \in \Z^n$ with $\bfa \in \mathbb{\Z}_{\geqslant 0}^{n}$ and $\bfb \in\set{0,1}^{n}$ having disjoint support:
\begin{equation}\label{eq:firstcotangentcohomologyofmatroids}
\dim \firstcotangentcohomology{\bfa-\bfb}{\D} =
\begin{cases}
     0
     & \text{if~} A \notin \D \text{~or~} \bfb = \emptyset \text{~or~}  \Ndelred{\bfb}{{\D}} \neq \emptyset, \\[1ex]
     \max\Set{\card{\matroidrel{C}{\lk_\D\!A}{\bfb}} -1,0}
     & \text{if $\card{\bfb} = 1$},     \\[1ex]
     \card{\matroidrel{C}{\lk_\D\!A}{\bfb }}
     & \text{if otherwise,}
\end{cases}
\end{equation}
where $A = \supp(\bfa)$,  $\bfb$ is identified with  $\supp(\bfb)$ and $\matroid{C}{\lk_\D\!A}{(\bfb)}=\Set{C\in\matroid{C}{\lk_\D\!A}\sodass \bfb \subseteq C}.$
\end{theorem}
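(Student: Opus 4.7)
The plan is to prove the two directions of the biconditional separately. The forward direction (matroid $\Rightarrow$ formula) will synthesize \Cref{coro:NdelredOfMatroids}, \Cref{prop:MatroidalIncgraphBijection}, and the link-reduction \eqref{eq:TiOfLink} into a case analysis on whether $\Ndelred{\bfb}{\M}$ is empty. The converse (formula $\Rightarrow$ matroid) will only require the formula at degrees of the form $-e_v$, anticipating \Cref{FirstCotangentFormulaDeterminesMatroid}.

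For the forward direction, assume $\Delta = \M$ is a matroid. If $A = \supp \bfa \notin \M$ or $\bfb = \emptyset$, then \Cref{lem:L2AC} immediately gives $T^1_{\bfa-\bfb}(\M) = 0$, matching the top case of \eqref{eq:firstcotangentcohomologyofmatroids}. Otherwise \eqref{eq:TiOfLink} reduces the computation to $T^1_{-\bfb}(\M')$ for $\M' \definedas \lk_\M A$, which is itself a matroid. Now I would split on the emptiness of $\Ndelred{\bfb}{\M'}$. When $\Ndelred{\bfb}{\M'} \neq \emptyset$ (this forces $\card{\bfb} > 1$), \Cref{coro:NdelredOfMatroids}\,\ref{item:lem42.3} says the inclusion-maximal elements of $\Ndel{\bfb}{\M'}$ coincide with those of $\Ndelred{\bfb}{\M'}$; since every connected component of $\incgraph{\emptyset,\bfb}{\M'}$ contains a maximum, none survives in $\incgraphred{\emptyset,\bfb}{\M'}$, and \eqref{eq:dimT1FromGraph} forces $\dim T^1_{-\bfb}(\M') = 0$. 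When $\Ndelred{\bfb}{\M'} = \emptyset$, \Cref{prop:MatroidalIncgraphBijection}\,\ref{item:pmi5} yields a bijection $\matroidrel{C}{\M'}{\bfb} \leftrightarrow \pathcomponents{\incgraphred{\emptyset,\bfb}{\M'}}$; feeding $\card{\matroidrel{C}{\M'}{\bfb}}$ into \eqref{eq:dimT1FromGraph} produces the remaining two branches of \eqref{eq:firstcotangentcohomologyofmatroids}, with the truncation $\max\set{\cdot - 1,\,0}$ absorbing the coloop case in which the graph is empty.

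For the converse, assume \eqref{eq:firstcotangentcohomologyofmatroids} holds and specialize it to $\bfa = 0$, $\bfb = \set{v}$ for each $v \in [n]$. Since $\Ndelred{\set{v}}{\D} = \emptyset$ for singletons, the formula forces $\dim T^1_{-e_v}(\D) = \max\set{\card{\matroidrel{C}{\D}{v}} - 1,\, 0}$. Combined with \eqref{eq:dimT1FromGraph}, the number of connected components of $\incgraphred{\emptyset,v}{\D}$ equals $\card{\matroidrel{C}{\D}{v}}$. \Cref{lemma:minimalelementsofmatroidalNdel} provides a surjection $\phi\colon \matroidrel{C}{\D}{v} \to \pathcomponents{\incgraphred{\emptyset,v}{\D}}$, and equality of finite cardinalities promotes it to a bijection. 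This is condition \ref{item:pmi5} of \Cref{prop:MatroidalIncgraphBijection} restricted to singletons, and the equivalence with \ref{item:pmi1} established there concludes that $\D$ is a matroid.

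The main obstacle is the subcase $\Ndelred{\bfb}{\M'} \neq \emptyset$, which has no analogue among the general simplicial complexes of Section~\ref{sec:T1inGeneral} and depends crucially on \Cref{coro:NdelredOfMatroids}\,\ref{item:lem42.3} to ensure that \emph{every} connected component of $\incgraph{\emptyset,\bfb}{\M'}$ meets $\Ndelred{\bfb}{\M'}$. A secondary bookkeeping task is to reconcile the $-1$ shift in \eqref{eq:dimT1FromGraph} when $\card{\bfb} = 1$ with the $\max\set{\cdot,\,0}$ truncation in \eqref{eq:firstcotangentcohomologyofmatroids} across all degenerate scenarios (empty $\Ndel{\bfb}{\M'}$, coloops in the link), so that the dimension count in the converse direction really pins down the number of components.
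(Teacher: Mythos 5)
Your proof takes essentially the same route as the paper: the forward direction reduces to the link via \eqref{eq:TiOfLink} and splits on whether $\Ndelred{\bfb}{\M'}$ is empty, invoking Corollary~\ref{coro:NdelredOfMatroids}\,\ref{item:lem42.3} for the nonempty case and Proposition~\ref{prop:MatroidalIncgraphBijection}\,\ref{item:pmi5} for the empty case, while the converse verifies the formula at degrees $-e_v$, promotes the surjection from Lemma~\ref{lemma:minimalelementsofmatroidalNdel} to a bijection, and concludes through the equivalences of Proposition~\ref{prop:MatroidalIncgraphBijection} (the paper states the contrapositive, but the content is identical). The only cosmetic deviation is that you fold the $\bfb\notin\M'$ case into the $\Ndelred$ dichotomy rather than citing Proposition~\ref{prop:dependentCotangentCohomology} separately as the paper does; this is sound since Corollary~\ref{coro:NdelredOfMatroids} and Proposition~\ref{prop:MatroidalIncgraphBijection} place no restriction to $\bfb\in\D$.
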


\begin{proof}
  We prove first, that if $\D$ is a matroid, then the formula holds.
  By \Cref{lem:L2AC} we only have to consider the case $A\in \D$, in which case we have by (\ref{eq:TiOfLink}) that
  \[
    \firstcotangentcohomology{\bfa-\bfb}{\D} \cong \firstcotangentcohomology{-\bfb}{\lk_\D\!A}.
  \]  
As the link of an independent set in a matroid is still a matroid,  we can restrict our proof to the case where $A = \emptyset$.
Also, if $\bfb=\emptyset$, then  $\firstcotangentcohomology{\bfa-\bfb}{\D} = 0$.
We already dealt with the setting where $\bfb \notin \D$ in \Cref{prop:dependentCotangentCohomology}, and a direct check gives the same formula. 
Thus, it remains to prove the theorem for  $A = \emptyset$ and $\emptyset \neq \bfb \in {\D}$. 
By (\ref{eq:dimT1FromGraph}) it is enough to prove that
\[
   \text{the number of connected components of $\incgraphred{\emptyset, \bfb}{\D}$} = 
    \begin{cases}
      \card{\matroid{C}{\D}{(\bfb)}}      &  \text{if~} \Ndelred{\bfb}{{\D}} = \emptyset,\\[1ex]
        0&\text{if~ otherwise.}
    \end{cases}
  \]
If $\Ndelred{\bfb}{{\D}} \neq \emptyset$, then by \Cref{coro:NdelredOfMatroids}~\ref{item:lem42.3} the inclusion-maximal elements in $\incgraph{\emptyset,\bfb}{\D}$ are all contained in $\Ndelred{\bfb}{\D}$. This means that every connected component contains something from $\Ndelred{\bfb}{\D}$ and
thus $\incgraphred{\emptyset, \bfb}{\D}$ is the empty graph.
If $\Ndelred{\bfb}{{\D}} = \emptyset$, by \Cref{prop:MatroidalIncgraphBijection}~\ref{item:pmi5} we have a bijection between $\matroidrel{C}{\D}{\bfb}$ and the connected components of $\incgraphred{\emptyset,\bfb}{\D}$. \\[1ex]
To see that the formula fails otherwise, assume that $\Delta$ is a nonempty simplicial complex that is not a matroid. Note that, $\Ndel{v}{\D}=\emptyset$ if and only if $v$ is a coloop, and that  $\Ndel{v}{\D}=\D\sdif v$ if and only if $v$ is a loop. 
Therefore, by \Cref{prop:MatroidalIncgraphBijection}~\ref{item:pmi3}, there exists at least one $v \in [n]$ such that $\Ndel{v}{\D}\neq \emptyset$ and there are at least two distinct minimal elements of $\Ndel{v}{\D}$ in the same connected component of $\incgraph{\emptyset, v}{\D}$.  
Thus,  $\matroidrel{C}{\D}{v}\geqslant 2$ and 
\[
  \dim \firstcotangentcohomology{-e_v}{\Delta} \leqslant \card{\pathcomponents{\incgraph{\emptyset, v}{\Delta}}} - 1 < \card{\matroidrel{C}{\Delta}{v}} -1 =\max\Set{\card{\matroidrel{C}{\D}{v}} -1,0}.
\]  
\end{proof}

From the proof of Theorem~\ref{thm:FirstCotangentCohomologyMatroid} we obtain the following.
\begin{corollary}
  \label{FirstCotangentFormulaDeterminesMatroid}
  A simplicial complex \D\ on $[n]$ is a matroid if and only if 
  \[    
    \dim\firstcotangentcohomology{-e_i}{\Delta} = \max \{ \card{\matroid{C}{\D}{(i)}}-1,0\}\quad\forall~ i=1,\dots,n.
    \]
  \end{corollary}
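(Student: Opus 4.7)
The plan is to derive both implications as specializations of \Cref{thm:FirstCotangentCohomologyMatroid} and of the combinatorial machinery assembled in \Cref{prop:MatroidalIncgraphBijection} and \Cref{lemma:minimalelementsofmatroidalNdel}; no fresh input is needed, so the corollary is essentially a packaging of what was already proved.

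For the forward direction, assuming $\D$ is a matroid, I would evaluate the formula (\ref{eq:firstcotangentcohomologyofmatroids}) at $\bfa = 0$ and $\bfb = e_i$ for each $i \in [n]$. Since a matroid is nonempty, $A = \supp \bfa = \emptyset \in \D$ and $\lk_\D A = \D$; and since $\card{\bfb} = 1$, one has $\Ndelred{\bfb}{\D} = \emptyset$ automatically. Thus the middle branch of (\ref{eq:firstcotangentcohomologyofmatroids}) applies and yields precisely $\dim \firstcotangentcohomology{-e_i}{\D} = \max\{\card{\matroidrel{C}{\D}{i}} - 1, 0\}$.

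For the converse I would argue by contrapositive. Suppose $\D$ is a nonempty simplicial complex that is not a matroid. By the equivalence of conditions \ref{item:pmi1} and \ref{item:pmi3} in \Cref{prop:MatroidalIncgraphBijection}, I obtain a vertex $v \in [n]$ such that $\Ndel{v}{\D}$ has two distinct inclusion-minimal elements lying inside a single connected component of $\incgraph{\emptyset,v}{\D}$. The nonemptiness of $\Ndel{v}{\D}$ excludes $v$ being a coloop, and the nonuniqueness excludes $v$ being a loop (for a loop, $\Ndel{v}{\D} = \D \sdif v$ has $\emptyset$ as its unique inclusion-minimal element), so $\{v\} \in \D$ and $v$ lies in at least two circuits, giving $\card{\matroidrel{C}{\D}{v}} \geq 2$.

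To conclude, I would invoke \Cref{lemma:minimalelementsofmatroidalNdel}: the map $\phi\colon \matroidrel{C}{\D}{v} \to \pathcomponents{\incgraphred{\emptyset,v}{\D}}$ is always surjective, and the nonuniqueness of minimal elements established above forces $\phi$ to be noninjective (recall that $\Ndelred{v}{\D} = \emptyset$ identifies $\incgraphred{\emptyset,v}{\D}$ with $\incgraph{\emptyset,v}{\D}$). Hence $\card{\pathcomponents{\incgraph{\emptyset,v}{\D}}} < \card{\matroidrel{C}{\D}{v}}$, and feeding this into the singleton case of (\ref{eq:dimT1FromGraph}) produces
\[
\dim \firstcotangentcohomology{-e_v}{\D} = \card{\pathcomponents{\incgraph{\emptyset,v}{\D}}} - 1 < \card{\matroidrel{C}{\D}{v}} - 1 = \max\{\card{\matroidrel{C}{\D}{v}} - 1, 0\},
\]
contradicting the assumed formula at $i = v$. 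The only delicate point I anticipate is the loop/coloop bookkeeping needed to ensure that the witnessing vertex $v$ genuinely produces a strict inequality; beyond that, the argument is just the final paragraph of the proof of \Cref{thm:FirstCotangentCohomologyMatroid} restated at the singleton degrees.
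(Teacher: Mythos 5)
Your proof is correct and follows essentially the same route the paper takes: the forward direction is the singleton specialization of \Cref{thm:FirstCotangentCohomologyMatroid}, and the converse is exactly the final paragraph of that theorem's proof, using \Cref{prop:MatroidalIncgraphBijection}~\ref{item:pmi3} to produce the witnessing vertex and the surjectivity of $\phi$ from \Cref{lemma:minimalelementsofmatroidalNdel} to force the strict inequality. The only cosmetic difference is that you write the singleton case of (\ref{eq:dimT1FromGraph}) as an equality where the paper writes $\leqslant$; since $\Ndelred{v}{\D}=\emptyset$ identifies the reduced and unreduced incidence graphs, your sharper form is also fine.
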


 In the final part of this section we will show that for matroids the dimension of each nonvanishing graded component of $T^1$ reaches the upper bound given in~\Cref{prop:BoundOfDegreewiseDimensionOfFirstCCM}.

\begin{proposition}\label{prop:firstcotangentcohomologyothergenerators}
Let $\M $ be a matroid, $A \in \M $, and denote by $\Lambda=\lk_\M  A$, the link of $A$ in $\M $. Let  $\bfb\in\Lambda$ which is either contained in or disjoint to any circuit of $\Lambda$. 
The following map is a bijection
\[
  \begin{tikzcd}[column sep  =2em,%
      row sep =0pt,%
      /tikz/column 1/.append style={anchor=base east},%
      /tikz/column 2/.append style={anchor=base west}
      ]  
      \matroidrel{C}{\Lambda}{\bfb}\ar[r]&
      \matroid{C}{\lk_{\Lambda}\bfb} \sdif
      \matroid{C}{\Lambda\sdif \bfb}\\
    C\ar[mapsto, r]& C\sdif \bfb.
  \end{tikzcd}
\]

\end{proposition}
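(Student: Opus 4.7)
The plan is to verify the three standard ingredients of a bijection: well-definedness, injectivity, and surjectivity. A preliminary remark organizes the proof: the hypothesis that every circuit of $\Lambda$ is either disjoint from or contains $\bfb$ is exactly the condition $\Ndelred{\bfb}{\Lambda}=\emptyset$ from \Cref{prop:NdelrelEmptysetEquivalence}, which I will use freely. Also, since $\Lambda=\lk_\M A$ is the link of an independent set in a matroid, it is itself a matroid, so the full circuit machinery is available on $\Lambda$.

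For well-definedness I take $C\in\matroidrel{C}{\Lambda}{\bfb}$ and check that $D\definedas C\sdif \bfb$ is a circuit of $\lk_\Lambda\bfb$ but not of $\Lambda\sdif \bfb$. The set $D$ is a proper subset of the minimal nonface $C$, hence a face of $\Lambda$, disjoint from $\bfb$; so $D\in\Lambda\sdif \bfb$, which rules out $D\in\matroid{C}{\Lambda\sdif \bfb}$. On the other hand $D\cup \bfb = C\notin\Lambda$ so $D\notin \lk_\Lambda \bfb$, and for every $F\subsetneq D$ we have $F\cup \bfb\subsetneq C$, a face of $\Lambda$ by minimality of $C$, so $F\in \lk_\Lambda \bfb$. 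Hence $D$ is a minimal nonface of $\lk_\Lambda \bfb$. Injectivity is immediate because $C$ can be recovered from $C\sdif \bfb$ as $(C\sdif \bfb)\cup \bfb$, given that $\bfb\subseteq C$.

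Surjectivity is the main content. Given $D\in\matroid{C}{\lk_\Lambda \bfb}\sdif \matroid{C}{\Lambda\sdif \bfb}$, the first step is to show $D\in\Lambda$. Since $D$ is a minimal nonface of $\lk_\Lambda \bfb$, every $D'\subsetneq D$ satisfies $D'\cup \bfb\in\Lambda$, hence $D'\in\Lambda$; so if $D$ were not in $\Lambda$, it would be a minimal nonface of $\Lambda\sdif \bfb$, contradicting the second membership condition. Knowing $D\in\Lambda$ and $D\cup \bfb\notin\Lambda$, pick any minimal nonface $C\in\matroid{C}{\Lambda}$ with $C\subseteq D\cup \bfb$. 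As $D\in\Lambda$ the inclusion $C\subseteq D$ is impossible, so $C\cap \bfb\neq\emptyset$, and the standing hypothesis on $\bfb$ then forces $\bfb\subseteq C$, i.e.\ $C\in\matroidrel{C}{\Lambda}{\bfb}$. Finally $C\sdif \bfb\subseteq D$ together with the fact (established in the well-definedness step) that $C\sdif \bfb$ is itself a circuit of $\lk_\Lambda \bfb$ and the minimality of $D$ give $C\sdif \bfb=D$, as required.

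The main obstacle I expect is keeping the two ambient simplicial complexes, $\lk_\Lambda \bfb$ and $\Lambda\sdif \bfb$, cleanly separated while unwinding what ``circuit'' means in each. In particular the key logical move — that the clause $D\notin\matroid{C}{\Lambda\sdif \bfb}$ in the codomain is equivalent, for $D\in\matroid{C}{\lk_\Lambda \bfb}$, to $D$ being an actual face of $\Lambda$ — is the step on which surjectivity hinges, and it is the only place where both conditions in the codomain need to be used simultaneously. Everything else reduces to small set-theoretic verifications that exploit the circuit-compatibility hypothesis on $\bfb$ via \Cref{prop:NdelrelEmptysetEquivalence}.
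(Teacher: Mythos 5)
Your proof is correct. The paper's own proof is a two-line reduction: after setting $A=\emptyset$ (harmless, since the link of an independent set in a matroid is again a matroid), it factors the desired bijection through the intermediate set $\min_{\subseteq}\Ndel{\bfb}{\Lambda}$, citing \Cref{lemma:minimalelementsofmatroidalNdel} for the bijection $\matroidrel{C}{\Lambda}{\bfb}\to\min_{\subseteq}\Ndel{\bfb}{\Lambda}$, $C\mapsto C\sdif\bfb$, and then the identification $\min_{\subseteq}\Ndel{\bfb}{\Lambda}=\matroid{C}{\lk_{\Lambda}\bfb}\sdif\matroid{C}{\Lambda\sdif\bfb}$ established in the proof of \Cref{prop:BoundOfDegreewiseDimensionOfFirstCCMOne} and restated as \Cref{prop:BoundOfDegreewiseDimensionOfFirstCCM}. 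You instead verify the composite map directly, never introducing $\Ndel{\bfb}{\Lambda}$; your well-definedness step recapitulates the content of \Cref{lemma:MinimalElementsofNdelAndNdelred}/\Cref{prop:NdelrelEmptysetEquivalence}, and your surjectivity step is essentially the argument in the proof of \Cref{prop:BoundOfDegreewiseDimensionOfFirstCCMOne} that identifies the minimal elements of $\Ndel{\bfb}{\Lambda}$ with $\matroid{C}{\lk_{\Lambda}\bfb}\cap(\Lambda\sdif\bfb)$. So the underlying combinatorics is the same; yours is self-contained at the cost of reproving surrounding lemmas, the paper's is terse at the cost of sending the reader back through the citation chain. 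One thing your unpacked argument makes visible and the paper's citation-style proof obscures: you never actually use that $\Lambda$ is a matroid (your preliminary remark that ``the full circuit machinery is available'' is never invoked) — only the circuit-compatibility hypothesis on $\bfb$ and $\bfb\in\Lambda$ are used — so the bijection is valid for arbitrary simplicial complexes, and the matroid hypothesis in the proposition serves only to match the setting in which it is applied.
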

    
\begin{proof}
  We can restrict to the case where $A = \emptyset$, because a matroid remains a matroid after contraction. So we may assume $\Lambda = \M $.
  Due to the assumption that $\bfb\cap C\in\Set{\emptyset, \bfb}$ for all $C\in\matroid{C}{\M }$, we may use \Cref{lemma:minimalelementsofmatroidalNdel}  
  we have that $\matroidrel{C}{\Lambda}{\bfb}$ is in bijection with the minimal elements of $\Ndel{\bfb}{\M }$.
  By \Cref{prop:BoundOfDegreewiseDimensionOfFirstCCM} these are exactly
  $ \matroid{C}{\lk_{\Lambda}\bfb} \sdif
  \matroid{C}{\Lambda\sdif \bfb}$.
\end{proof}

\begin{remark}
  Using the notation $\Lambda=\lk_\M  A$  of \Cref{prop:firstcotangentcohomologyothergenerators}, we can interchange $\matroidrel{C}{\Lambda}{\bfb}$ with $\matroid{C}{\lk_\Lambda \bfb} \sdif \matroid{C}{\Lambda\sdif \bfb}$ in \cref{eq:firstcotangentcohomologyofmatroids}. 
  This  means that, when a graded component of the first cotangent cohomology is nonzero, then its dimension is maximal in the sense of \Cref{prop:BoundOfDegreewiseDimensionOfFirstCCM}. However, while for matroids the existence of a circuit $C$ with $\bfb \cap C \notin \{\emptyset,\bfb\}$ implies $ \firstcotangentcohomology{-\bfb}\M=0$, for nonmatroids this is not the case. Take for example the simplicial complex $\D$ on $\{1,\dots,5\}$ with minimal nonfaces $\{12,13,234,235,145\}$\footnote{ Where $12$ denotes $\{1,2\}$ and so on.} and $\bfb=45$.  Then we have $\incgraph{\emptyset,\bfb}{\Delta} = \{[1],[2, 3, 23]\}$, where the square brackets indicate the connected component,  and $\Ndelred{\bfb}{\Delta} = \{2,3\}$. Thus, $\dim \firstcotangentcohomology{-\bfb}{\Delta} = 1$.
\end{remark}

\begin{example}
  Let $\uniform{n}{k}$ be an uniform matroid on the  ground set $[n]$, and let $\bfa-\bfb \in \Z^n$ with $\bfa \in \mathbb{N}^n$ and $\bfb \in \Set{0,1}^n$ with disjoint supports. Let $A =\supp \bfa$ and identify $\bfb$ and $\supp \bfb$. Using  $\matroid{C}{\uniform{n}{k}} = \Set{ C \subseteq [n] \sodass \card{C} = k+1}$ in \Cref{thm:FirstCotangentCohomologyMatroid} one obtains
\begin{equation}
\dim \firstcotangentcohomology{\bfa-\bfb}{{\uniform{n}{k}}} =
\begin{cases}
     1 
     & \text{if } \card{\bfb} > 1 \text{, } k = n-1 \text{ and } \card{A} \leqslant k,
     \\[1ex]
\displaystyle
     \binom{n - \card{A} - 1}{n-k-1}-1 
     & \text{if } \card{\bfb} = 1 \text{, } k < n-1  \text { and } \card{A} + \card{\bfb} \leqslant k ,
     \\[1ex]
     0  
     & \text{if otherwise}.
\end{cases}
\end{equation}
\end{example}

\section{Characterization of matroids by their first cotangent cohomology}\label{First cotangent cohomology as a matroid invariant}
So far we have seen that the dimensions of the graded components of  $T^1$ for matroids can be computed by counting circuits.
This count can be done on the nonfaces of any simplicial complex, but it returns $T^1$ if and only if the complex is a matroid. We ask next  how much information the first cotangent cohomology module of a matroid retains. In this section  we show that,  in contrast to the general case, matroids can be fully recovered from $T^1$ unless this module is trivial (\Cref{prop:reconstructrankonematroids}) and that triviality happens only in the extremal case of discrete matroids (\Cref{cor:trivialcotangentcohom.}).

\begin{lemma}\label{lemma:cotangentloopsandcoloops}
Let $\M $ be a matroid on $[n]$ and $v \in [n]$. The following are equivalent.
\begin{enumerate}[label=(\roman*),font=\upshape]
    \item\label{item:loops1} $v$ is a loop or a coloop.
    \item\label{item:loops2} $\firstcotangentcohomology{\bfc}{\M } = 0$ for any $\bfc\in \Z^{n}$ with $\bfc_v=-1$.
    \item \label{item:loops3} $\firstcotangentcohomology{-e_v - e}{\M } = 0$ for any $e \in \Set{0} \cup \Set{e_i \sodass i \in [n]~~\and~~ i\neq v}$.
\end{enumerate}
\end{lemma}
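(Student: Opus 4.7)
The plan is to establish the cycle \ref{item:loops1} $\Rightarrow$ \ref{item:loops2} $\Rightarrow$ \ref{item:loops3} $\Rightarrow$ \ref{item:loops1}; the middle implication is trivial since the degrees appearing in \ref{item:loops3} form a subset of those in \ref{item:loops2}.

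For \ref{item:loops1} $\Rightarrow$ \ref{item:loops2} I will treat the loop and coloop cases separately. If $v$ is a loop, then $v$ is not a vertex of $\M$, so $v \notin [\lk_\M A]$ for every $A \in \M$; writing $\bfc = \bfa - \bfb$ with $\bfc_v = -1$ forces $v \in \supp \bfb$, and \Cref{lem:L2AC} gives the vanishing. If $v$ is a coloop, I first verify that $v$ remains a coloop of $\lk_\M A$ for every $A \in \M$, using that each facet of the link has the form $B \sdif A$ for a facet $B$ of $\M$, which contains $v$ since $v \in B$ and $v \notin A$. The isomorphism (\ref{eq:TiOfLink}) then reduces the problem to $\firstcotangentcohomology{-\bfb}{\lk_\M A} = 0$ whenever $v \in \bfb$. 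Because a coloop lies in no circuit, $\matroidrel{C}{\lk_\M A}{\bfb} = \emptyset$, so each branch of \Cref{thm:FirstCotangentCohomologyMatroid} returns $0$ (in the singleton case one uses $\max\set{0-1,0} = 0$).

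For \ref{item:loops3} $\Rightarrow$ \ref{item:loops1} I will argue by contrapositive. Assume $v$ is neither a loop nor a coloop. Not being a coloop, $v$ lies in at least one circuit; set $k \coloneqq \card{\matroidrel{C}{\M}{v}} \geq 1$. If $k \geq 2$, then \Cref{thm:FirstCotangentCohomologyMatroid} immediately yields $\dim \firstcotangentcohomology{-e_v}{\M} = k - 1 > 0$. If $k = 1$, let $C$ be the unique circuit through $v$; since $v$ is not a loop, $\card{C} \geq 2$, and I pick any $w \in C \sdif \set{v}$. The central step is to show that every circuit containing $w$ also contains $v$: otherwise a circuit $C'$ with $w \in C'$ and $v \notin C'$ would, via strong circuit elimination \ref{item:circEl1} applied with common element $w$ and distinguished element $v$, produce a circuit $C'' \subseteq (C \cup C') \sdif \set{w}$ containing $v$ and distinct from $C$ (since $w \in C \sdif C''$), contradicting $k = 1$. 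Combined with the uniqueness of $C$ through $v$, this forces every circuit of $\M$ either to equal $C$ (and thus contain $\set{v, w}$) or to be disjoint from $\set{v, w}$. \Cref{prop:NdelrelEmptysetEquivalence} then yields $\Ndelred{\set{v,w}}{\M} = \emptyset$, and \Cref{thm:FirstCotangentCohomologyMatroid} gives $\dim \firstcotangentcohomology{-e_v - e_w}{\M} = \card{\matroidrel{C}{\M}{\set{v,w}}} = 1$.

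The main obstacle is the subcase $k = 1$ of the contrapositive: there $\firstcotangentcohomology{-e_v}{\M}$ already vanishes, so the witness of non-rigidity must be found in degree $-e_v - e_w$ for a carefully chosen $w$, and strong circuit elimination is precisely what promotes the uniqueness of the circuit through $v$ into the symmetric uniqueness statement about $w$ required to apply \Cref{prop:NdelrelEmptysetEquivalence}.
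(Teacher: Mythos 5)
Your proposal is correct and follows essentially the same route as the paper: reduce (i)$\Rightarrow$(ii) to the link and apply \Cref{thm:FirstCotangentCohomologyMatroid}, and prove (iii)$\Rightarrow$(i) by contraposition, using strong circuit elimination together with \Cref{prop:NdelrelEmptysetEquivalence} to show $\Ndelred{\set{v,w}}{\M}=\emptyset$ for a suitable $w$ in the unique circuit through $v$, so that the formula yields a nonzero component in degree $-e_v-e_w$. The only cosmetic difference is that you dispatch the loop case of (i)$\Rightarrow$(ii) directly via \Cref{lem:L2AC} rather than uniformly through the circuit count.
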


\begin{proof}
\ref{item:loops1} $\Rightarrow$ \ref{item:loops2}
Let $\bfb$ and $A$ be the support of the negative and positive parts of $\bfc$, respectively.
(Co)loops under restriction and deletion remain (co)loops. So if $v$ is a (co)loop in $\M $ and $\bfc_v=-1$, then $v$ is a (co)loop in $\lk_\M  A$. Therefore, we may use  $T^1_{\bfc}(\M )=T^1_{-\bfb}(\lk_\M  A)$. If $v\in\bfb$  is a (co)loop, then the only circuit that may contain $\bfb$ is $\Set{v}$.
Thus, the statement follows from \Cref{thm:FirstCotangentCohomologyMatroid}.\\[1ex]
\ref{item:loops2} $\Rightarrow$ \ref{item:loops3}
The third point is a restriction of the second  to a subset of degrees.\\[1ex]
\ref{item:loops3} $\Rightarrow$ \ref{item:loops1}
From $\firstcotangentcohomology{-e_v}{\M } = 0$  we obtain by \Cref{thm:FirstCotangentCohomologyMatroid} that $v$ is contained in at most one circuit.
If  $v$ is neither a loop nor a coloop, then $v$ is \emph{properly} contained in precisely one circuit $C$. So there exists $i\in C$ with $i\neq v$.
To obtain the contradiction $\dim T^{1}_{-e_v-e_i}\neq 0$ from \Cref{thm:FirstCotangentCohomologyMatroid}, we must make sure that $\Ndelred{\Set{v,i}}{\M }=\emptyset$. Let us assume $\Ndelred{\Set{v,i}}{\M }\neq\emptyset$. By~\Cref{prop:NdelrelEmptysetEquivalence} this means that there is a circuit $C'\neq C$ which intersects $\Set{v,i}$ in a proper subset. As $v$ is contained in only one circuit, we have $i\in C'$ and $v\notin C'$.
Using the strong circuit elimination axiom \cite[Proposition~1.4.12]{OXLEY} we find a circuit $C''$ which contains $v$ and which is contained in $(C \cup C') \sdif \Set{i}$.
This contradicts that only one circuit contains $v$. Thus $v$ is either a loop or a coloop.
 \end{proof}

\begin{corollary}\label{cor:trivialcotangentcohom.}
Let $\M $ be a matroid. The following are equivalent:
\begin{enumerate}[label=(\roman*),font=\upshape]
    \item\label{item:rigid1} $\M $ is algebraically rigid.
    \item\label{item:rigid2} $\M $ is $\emptyset$-rigid.
    \item\label{item:rigid3} $\M $ consists of only loops and coloops.
    \item\label{item:rigid4} $\M  \cong \uniform{\ell}{0} * \uniform{c}{c}$ for some $\ell,c \in \N$.
\end{enumerate}
\end{corollary}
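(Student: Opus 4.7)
The plan is to establish the chain of implications (i)$\Rightarrow$(ii)$\Rightarrow$(iii)$\Rightarrow$(iv)$\Rightarrow$(i). Two of these are essentially definitional: (i)$\Rightarrow$(ii) is immediate since $\emptyset$-rigidity only requires the vanishing of $T^{1}$ in the subcone $\Z_{\leqslant 0}^{n}$, and (iii)$\Rightarrow$(iv) is combinatorial, since if every vertex is a loop or coloop then the coloops $C\subseteq[n]$ are precisely the elements of the unique facet of $\M$ (coloops lie in every facet, loops lie in none), so $\M=2^{C}$, which coincides with the join $\uniform{\ell}{0}\ast \uniform{c}{c}$ where $\ell=\card{[n]\sdif C}$ and $c=\card{C}$.

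For (ii)$\Rightarrow$(iii) I plan to apply \Cref{lemma:cotangentloopsandcoloops} vertex-by-vertex. If $\M$ is $\emptyset$-rigid, then in particular $T^{1}_{-e_{v}}(\M)=0$ and $T^{1}_{-e_{v}-e_{i}}(\M)=0$ for all $v\neq i$, since all these degrees lie in $\Z_{\leqslant 0}^{n}$. This is condition \ref{item:loops3} in the lemma, which gives condition \ref{item:loops1}: $v$ is a loop or a coloop. Since $v$ was arbitrary, every vertex is a loop or coloop.

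For (iv)$\Rightarrow$(i), the cleanest route is to reuse \Cref{lemma:cotangentloopsandcoloops} in the opposite direction. By \Cref{lem:L2AC}, $T^{1}_{\bfc}(\M)=0$ unless the negative part $\bfb$ of $\bfc$ is a nonzero $0$--$1$ vector. So fix any $\bfc=\bfa-\bfb$ with $\bfb\neq 0$ and pick some $v\in\bfb$, hence $\bfc_{v}=-1$. Since in $\uniform{\ell}{0}\ast \uniform{c}{c}$ every vertex is either a loop (coming from the $\uniform{\ell}{0}$ factor) or a coloop (coming from the $\uniform{c}{c}$ factor), implication \ref{item:loops1}$\Rightarrow$\ref{item:loops2} of \Cref{lemma:cotangentloopsandcoloops} gives $T^{1}_{\bfc}(\M)=0$. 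Combined with the vanishing from \Cref{lem:L2AC}, this shows $T^{1}(\M)=0$.

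There is no real obstacle here: all the technical work has been done in \Cref{lemma:cotangentloopsandcoloops}, whose hard direction (using the strong circuit elimination axiom) is the only nontrivial input. The corollary itself is essentially a packaging of that lemma, together with the trivial combinatorial observation that a matroid whose vertices are all loops or coloops must be a join of a simplex with a set of loops.
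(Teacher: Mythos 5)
Your proof is correct, and the implication cycle you use, (i)$\Rightarrow$(ii)$\Rightarrow$(iii)$\Rightarrow$(iv)$\Rightarrow$(i), matches the paper's. The first three arrows are argued essentially the same way: (i)$\Rightarrow$(ii) is definitional, (ii)$\Rightarrow$(iii) applies \Cref{lemma:cotangentloopsandcoloops} vertex-by-vertex, and (iii)$\Rightarrow$(iv) is the observation that a loop-and-coloop matroid is the join of a discrete loop set with a simplex.

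Where you genuinely diverge is the closing arrow (iv)$\Rightarrow$(i). The paper invokes the external fact that a join of simplicial complexes is algebraically rigid if and only if each factor is rigid (\cite[Proposition 2.3]{altmann2016rigidity}), applied to the rigid pieces $\uniform{\ell}{0}$ and $\uniform{c}{c}$. You instead route through (iii) and reuse the implication \ref{item:loops1}$\Rightarrow$\ref{item:loops2} of \Cref{lemma:cotangentloopsandcoloops}: combined with \Cref{lem:L2AC} (which kills all degrees where the negative part $\bfb$ is zero or not a $0$--$1$ vector), for every remaining degree you can pick a vertex $v\in\bfb$ with $\bfc_v=-1$, and since $v$ is a loop or coloop the lemma gives $T^1_\bfc(\M)=0$. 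This is sound, and in fact it is the more economical route in context: it stays entirely inside the matroid-specific machinery already developed (\Cref{thm:FirstCotangentCohomologyMatroid} and \Cref{lemma:cotangentloopsandcoloops}), avoiding the import of the general join-rigidity theorem. The paper's route has the merit of exhibiting the corollary as a consequence of a known structural principle for joins, which may be why it is phrased via (iv), but your argument is arguably tighter here since \Cref{lemma:cotangentloopsandcoloops} already did the heavy lifting in both directions.
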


\begin{proof}
\ref{item:rigid1}~$\Rightarrow$~\ref{item:rigid2} Follows directly from the definition.\\[1ex]
\ref{item:rigid2}~$\Rightarrow$~\ref{item:rigid3}  The condition \ref{item:rigid2} implies the statement in \Cref{lemma:cotangentloopsandcoloops}~\ref{item:loops3} for any $v \in [n]$.  
Thus, by the same lemma, every element in $[n]$ is either a loop or a coloop.\\[1ex]
\ref{item:rigid3}~$\Rightarrow$~\ref{item:rigid4} The restriction of $\M $ to the loops is isomorphic to $\uniform{\ell}{0}$ and to the coloops to $\uniform{c}{c}$.\\[1ex]
\ref{item:rigid4}~$\Rightarrow$~\ref{item:rigid1}
 As $\uniform{\ell}{0}$ and $\uniform{c}{c}$ are rigid, we conclude by \cite[Proposition 2.3.]{altmann2016rigidity}\footnote{ This proposition says that the join of two complexes is rigid if and only if each of them is rigid.}.
\end{proof}
Matroids satisfying  the equivalent conditions of \Cref{cor:trivialcotangentcohom.} are  called \defi{discrete matroids}. 
So~\Cref{cor:trivialcotangentcohom.} shows that the first  cotangent cohomology does not distinguish among discrete matroids, but that it determines whether  a matroid is discrete or not.
Algebraically rigid simplicial complexes in general are  not classified yet \cite{altmann2016rigidity}.

In the remainder of this section we will show that, for nondiscrete matroids, $T^1$ encodes the entire combinatorial structure. 

\begin{proposition}\label{prop:rkanddimensionsofmatroidsandccm}
  Fix $n\in\N$, let \D\ be a simplicial complex on $[n]$ and $A \subseteq [n]$. We have:
\begin{enumerate}[label=(\roman*),font=\upshape]
    \item\label{item:rank1} 
       $ \rk{}{\Delta} \geqslant~ 1+ \max \Set{\card{A} \sodass \firstcotangentcohomology{}{\link[\Delta]{A}} \neq 0}.
    $
    \item\label{item:rank2}  If \D\ is a matroid with no coloops and $A\!\in\!\D$, then $\lk_\D A$ is rigid if and only if  $A$ is a basis.
    \item\label{item:rank3}  If $\D$ is a matroid that is not discrete, then ~~
    \(
    \rk{}{\D} = 1 + \max \Set{\card{A} \sodass  \firstcotangentcohomology{}{\lk_\D A} \neq 0}.
  \)  
\end{enumerate}
\end{proposition}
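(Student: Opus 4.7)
For part (i), I would first reduce to the case $A \in \Delta$: if $A \notin \Delta$ then every superset of $A$ is a nonface, $\lk_\Delta A$ is empty, and $T^1$ vanishes. When $A \in \Delta$, \Cref{lem:L2AC} applied to $\lk_\Delta A$ forces any nonvanishing graded component $T^1_{-\bfb}(\lk_\Delta A)$ to have $0 \neq \bfb \subseteq [\lk_\Delta A]$. Hence $T^1(\lk_\Delta A) \neq 0$ forces $\lk_\Delta A$ to contain at least one vertex $v$, i.e.\ $A \cup \{v\} \in \Delta$, so $\rank \Delta \geqslant |A| + 1$.

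For part (ii), the forward direction is immediate: if $A$ is a basis then $\lk_\M A = \{\emptyset\}$ has no vertices, hence $T^1 = 0$ by the same argument as in (i). For the converse, \Cref{cor:trivialcotangentcohom.} reduces the claim to showing that if $A$ is not a basis then $\lk_\M A$ is not discrete. Because $A$ is not a basis it can be extended to a strictly larger face, so $\lk_\M A$ has at least one non-loop vertex; it only remains to rule out coloops in $\lk_\M A$. Suppose $v$ were such a coloop; then $v$ lies in every basis of $\M$ containing $A$. Since $\M$ has no coloops, pick a basis $B_0$ of $\M$ with $v \notin B_0$, and extend $A$ to a basis $B_1$ of $\M$ (so necessarily $v \in B_1$). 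Applying the basis exchange axiom to $v \in B_1 \sdif B_0$ yields $y \in B_0 \sdif B_1$ such that $(B_1 \sdif \{v\}) \cup \{y\}$ is a basis; this basis contains $A$ and avoids $v$, a contradiction.

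For part (iii), the inequality $\geqslant$ is exactly (i). For the reverse, I would produce a face $A$ of size $\rank \M - 1$ with $\lk_\M A$ not rigid. Since $\M$ is nondiscrete it has at least two distinct bases $B_1 \neq B_2$ (a discrete matroid has a unique basis, namely its set of coloops, as every non-coloop would otherwise be forced by augmentation to be a loop). Pick $v \in B_1 \sdif B_2$; basis exchange then supplies $u \in B_2 \sdif B_1$ such that $(B_1 \sdif \{v\}) \cup \{u\}$ is a basis. Setting $A = B_1 \sdif \{v\}$, the link $\lk_\M A$ is a rank-$1$ matroid containing the two distinct singleton bases $\{v\}$ and $\{u\}$. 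A rank-$1$ discrete matroid has a unique basis, so $\lk_\M A$ is not discrete, whence not rigid by \Cref{cor:trivialcotangentcohom.}.

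The main obstacle I anticipate is the coloop-propagation step in (ii), where the basis exchange axiom must be deployed carefully after extending $A$ to a basis of $\M$ in order to transfer ``no coloops in $\M$'' to ``no coloops in $\lk_\M A$''. Once this is in hand, both (ii) and (iii) follow cleanly from the rigidity--discreteness dichotomy of \Cref{cor:trivialcotangentcohom.} combined with the vertex-existence observation from (i).
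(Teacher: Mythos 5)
Your proof is correct. For parts (i) and (ii) you take essentially the same route as the paper: in (i) the key observation is that a link with no vertices has vanishing $T^1$ (the paper phrases this as rigidity of $\{\emptyset\}$ and of the empty complex, you derive it from \Cref{lem:L2AC}, but the content is the same), and in (ii) you run the identical basis-exchange argument to propagate coloop-freeness from $\M$ to $\lk_\M A$ before invoking \Cref{cor:trivialcotangentcohom.}. Part (iii) is where you genuinely diverge, and your route is cleaner. The paper obtains the lower bound on $\rk{}{\D}$ by invoking part~(ii) when $\M$ is coloop-free and, in the presence of coloops, decomposing $\M$ as the join of its coloop-free part with a discrete matroid and applying the tensor-product decomposition of $T^1$ for joins from \cite[Corollary~2.4]{altmann2016rigidity}. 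You instead build a witness face directly: nondiscreteness gives two distinct bases $B_1\neq B_2$, and a single basis exchange produces a face $A=B_1\sdif\{v\}$ of cardinality $\rk{}{\M}-1$ whose link has rank one and at least two bases, hence is nondiscrete and therefore nonrigid by \Cref{cor:trivialcotangentcohom.}. This buys you a proof of (iii) that is uniform in whether $\M$ has coloops, that needs no input beyond \Cref{cor:trivialcotangentcohom.}, and that does not depend logically on part~(ii) --- though (ii) must still be proved in its own right, since it is used independently later, for instance in the proof of \Cref{thm:ReconstructMatroidsFromCCM}.
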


\begin{proof}
\ref{item:rank1} Assume that  $\card{A} \geqslant \rk{}{\Delta}$. 
Thus $A$ is either a facet or a nonface, so $\link[\Delta]{A}$ contains only the empty set or is empty, respectively. 
In both cases $\link[\Delta]{A}$ is rigid.
Thus, if $\firstcotangentcohomology{}{\link[\Delta]{A}} \neq 0$, then $\card{A} < \rk{}{\Delta}$.\\[1ex]
\ref{item:rank2} We claim that if a matroid $\D$ has no coloops and $A \in \D$, then $\lk_\D A$ has no coloops.\\[.5ex]
Assume that $v\in\lk_\D A$ is a coloop in the link but not in $\D$. This means, that
\begin{equation}
  \label{eq:coloopBasis}
  \forall B\in\matroid{B}{\D} \text{~if~} A\subseteq B,\text{~then~}v\in B.
\end{equation}
As $v$ is not a coloop of $\D$, there must exist a basis $B'$ with $v\notin B'.$ Choose now one basis $B$ with $A\cup \{v\}\subseteq B$. As $v\in B\sdif B'$, by the basis exchange axiom for matroids we have that there exists $w\in B'\sdif B$ such that
\[
  B'' = (B\sdif v)\cup\{w\}\in\matroid{B}{\D}.
\]
As $A\subseteq B''$ and $v\notin B''$ we obtain a contradiction to (\ref{eq:coloopBasis}). \\[.5ex]   
In summary, by \Cref{cor:trivialcotangentcohom.}, $\lk_\D A$ is rigid if and only if $\lk_\D A = \uniform{[n]\sdif A}{0}$.
Since $\rk{}{\lk_\D A} = \rk{}{\D} - \rk{}{A}$ this can only happen if $A$ is a basis.\\[1ex]
\ref{item:rank3} One inequality follows from~\ref{item:rank1}.
Part~\ref{item:rank2} implies the other inequality if \D\ is coloop free. 
If \D\ has a set of coloops $C$ we can write $\D = \D' * \uniform{\card{C}}{\card{C}}$ with $\D' = \lk_\D C$ being coloop free. 
Again by~\ref{item:rank2} we find a $B \in \D'$ with $\card{B} = \rk{}{\D'}-1$ and $\lk_{\D'}B$ nonrigid. So $B \cup C$ satisfies $\card{(B \cup C)} = \rk{}{\D} - 1$ and by \cite[Corollary~2.4]{altmann2016rigidity} we get
\begin{eqnarray*}
      \firstcotangentcohomology{}{\lk_\D(B \cup C)} &=& \big(\firstcotangentcohomology{}{\lk_{\D'}B} \otimes \K[\uniform{\card{C}}{0}]\big) \oplus \big(\K[\lk_{\D'}B] \otimes \firstcotangentcohomology{}{\uniform{\card{C}}{0}}\big) \\ 
                                                    & =& \big(\firstcotangentcohomology{}{\lk_{\D'}B} \otimes \K[x_1,\dots,x_{\card{C}}] \big) \oplus 0.                                                        
\end{eqnarray*}
The latter is nontrivial because   $\firstcotangentcohomology{}{\lk_{\D'}B} \neq 0$.
\end{proof}

\begin{lemma}\label{prop:reconstructrankonematroids}
If $\M $ is a  matroid of rank one, then we can reconstruct the independent sets of $\M $ from the degreewise dimensions of the first cotangent cohomology module $\firstcotangentcohomology{}{\M }$.
\end{lemma}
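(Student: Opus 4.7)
The plan is to recover the set $S\subseteq [n]$ of non-loops of $\M$, since the independent sets of a rank-one matroid are exactly the empty set and the singletons $\set{v}$ with $v\in S$, and $S$ is nonempty by the rank hypothesis. The circuits of $\M$ are explicit: the singletons $\set{v}$ for every loop $v\in [n]\sdif S$, together with all pairs $\set{u,v}$ for distinct $u,v\in S$. In particular the sets $\matroidrel{C}{\M}{v}$ and $\matroidrel{C}{\M}{\bfb}$ appearing in our formulas are easy to count once $S$ is fixed.

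First I would apply \Cref{FirstCotangentFormulaDeterminesMatroid} at singleton degrees, which gives
\[
\dim \firstcotangentcohomology{-e_v}{\M}=\max\set{\card{\matroidrel{C}{\M}{v}}-1,\,0}.
\]
This evaluates to $0$ whenever $v$ is a loop, and to $\max\set{\card{S}-2,\,0}$ whenever $v\in S$. Thus, in the generic case $\card{S}\geqslant 3$, the set $S$ is recovered at once as $\Set{v\in [n]\sodass \dim \firstcotangentcohomology{-e_v}{\M}>0}$.

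Next I would handle the borderline case $\card{S}=2$, where all singleton-degree components vanish. For any $\bfb\subseteq [n]$ with $\card{\bfb}=2$, \Cref{thm:FirstCotangentCohomologyMatroid} yields $\dim\firstcotangentcohomology{-\bfb}{\M}=\card{\matroidrel{C}{\M}{\bfb}}$ if $\Ndelred{\bfb}{\M}=\emptyset$ and $0$ otherwise. By \Cref{prop:NdelrelEmptysetEquivalence}, the condition $\Ndelred{\bfb}{\M}=\emptyset$ forces every circuit to be contained in or disjoint from $\bfb$. With the explicit circuit list in hand, any pair $\bfb$ containing a loop $w$ fails this condition because the circuit $\set{w}$ meets $\bfb$ in the proper subset $\set{w}$; so when $\card{S}=2$ the only surviving pair is $\bfb=S$, where $\matroidrel{C}{\M}{\bfb}=\set{S}$ contributes dimension $1$. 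Hence $S$ is singled out as the unique two-element support with nonzero $T^1$. The residual case $\card{S}=1$ gives $T^1(\M)=0$ by \Cref{cor:trivialcotangentcohom.}, corresponding to a discrete rank-one matroid, which is the case that the reconstruction program of this section explicitly sets aside.

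The main obstacle I anticipate is the bookkeeping in the $\card{S}=2$ case: one has to verify that no other two-element $\bfb$ accidentally satisfies $\Ndelred{\bfb}{\M}=\emptyset$. The combination of \Cref{prop:NdelrelEmptysetEquivalence} with the explicit circuit description of rank-one matroids handles this cleanly, and the rest of the argument is immediate from the two corollaries and the theorem already proved in Section~\ref{sec:t1forMatroids}.
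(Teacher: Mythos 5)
Your proof is correct and follows essentially the same route as the paper: both arguments reduce to the normal form of a rank-one matroid as a join of a uniform rank-one piece with loops, and then split into the cases $\card{S}\geqslant 3$, $\card{S}=2$, and $\card{S}=1$ (discrete), reading $S$ off from the singleton degrees in the generic case and from the unique nonvanishing pair degree in the borderline case. You spell out the circuit counting that the paper leaves implicit, but the underlying case analysis and the invocation of \Cref{thm:FirstCotangentCohomologyMatroid} and \Cref{cor:trivialcotangentcohom.} are the same.
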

\begin{proof}
  For a matroid of rank one every singleton subset of the ground set is either dependent or a basis. Thus, all rank one matroids are of the form
  \[
    \M  = \uniform{m}{1} * \uniform{\ell}{0}.
  \]
Note that any link, with the possible exception of the link of the empty set, has trivial cotangent cohomology.
Thus, the degrees in which $T^1$ is nontrivial have no positive entries.
The matroid is discrete if and only if $m=1$.
If $m = 2$, then the only nontrivial component is $\firstcotangentcohomology{-e_1-e_2}{\M}$, since any element in the ground set is contained in at most one circuit.
Thus, we can recover the independent sets from this one degree.
If $m > 2$, then  $i\in[n]$ is  contained in at least two circuits if and only if $i\in\uniform{m}{1}$.
Therefore $\Set{i}\in\M$ if and only if $\firstcotangentcohomology{-e_i}{\M } \neq 0$.
\end{proof}

We now have all the tools we need to prove the main theorem of this section.

\begin{theorem}\label{thm:ReconstructMatroidsFromCCM}
A matroid $\M $ is discrete if and only if $\firstcotangentcohomology{}{\M}=0.$ If $\firstcotangentcohomology{}{\M}\neq 0$, then we can recover all its independent sets from the dimensions of the graded components of $\firstcotangentcohomology{}{\M }$.
\end{theorem}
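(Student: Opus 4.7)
The first part of the theorem is precisely Corollary~\ref{cor:trivialcotangentcohom.}. For the reconstruction claim I would argue by induction on the rank $r$ of $\M$; the base case $r=1$ is covered by Lemma~\ref{prop:reconstructrankonematroids}.

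For the inductive step, assume the result holds for every non-discrete matroid of rank strictly less than $r$, and let $\M$ be a non-discrete matroid of rank $r\geqslant 2$. By Lemma~\ref{lem:L2AC} together with the isomorphism \eqref{eq:TiOfLink}, the graded dimensions of $\firstcotangentcohomology{}{\lk_\M A}$ are determined by those of $\firstcotangentcohomology{}{\M}$ for every $A\subseteq[n]$. Hence I first recover $r$ from Proposition~\ref{prop:rkanddimensionsofmatroidsandccm}(iii). Then, for every nonempty $A\subseteq[n]$ with $\firstcotangentcohomology{}{\lk_\M A}\neq 0$, Lemma~\ref{lem:L2AC} forces $A\in\M$, Corollary~\ref{cor:trivialcotangentcohom.} forces $\lk_\M A$ to be non-discrete, and this link has rank $r-\card{A}<r$. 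The inductive hypothesis reconstructs the independent sets of $\lk_\M A$, and thereby all independent sets of $\M$ that contain such an $A$.

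The remaining task is to verify that every independent set of $\M$ is recovered. If $\M$ is coloop-free, Proposition~\ref{prop:rkanddimensionsofmatroidsandccm}(ii) asserts that $\firstcotangentcohomology{}{\lk_\M A}\neq 0$ precisely when $A$ is a non-basis independent set; since every basis $B$ has $\card{B}=r\geqslant 2$, any nonempty proper subset $A\subsetneq B$ is then a non-basis independent set with $\firstcotangentcohomology{}{\lk_\M A}\neq 0$, so $B$ is recovered via the inductively reconstructed $\lk_\M A$. In general, I first identify the union of loops and coloops via Lemma~\ref{lemma:cotangentloopsandcoloops}; the two classes are separated using that $v$ is a loop iff $\lk_\M v=\emptyset$ (so $\firstcotangentcohomology{}{\lk_\M v}=0$ automatically), whereas for a coloop $v$ one has $\lk_\M v=\M\sdif v$, which remains non-discrete because $\M$ is non-discrete, and hence $\firstcotangentcohomology{}{\lk_\M v}\neq 0$. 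Writing $\M=\M''*\uniform{c}{c}*\uniform{\ell}{0}$, where $\M''$ is the loop- and coloop-free part (still non-discrete), the coloop-free argument applied to $\M''$ together with the identified loop and coloop sets completes the reconstruction.

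The main obstacle is the general case: one must extract $\firstcotangentcohomology{}{\M''}$ from $\firstcotangentcohomology{}{\M}$ after the loops and coloops have been identified. This uses the join formula for the first cotangent cohomology \cite[Corollary~2.4]{altmann2016rigidity}, which expresses $\firstcotangentcohomology{}{\M}$ as a sum of tensor products involving the first cotangent cohomologies and Stanley--Reisner rings of $\M''$, $\uniform{c}{c}$, and $\uniform{\ell}{0}$; the $\Z^n$-grading then determines $\firstcotangentcohomology{}{\M''}$ unambiguously.
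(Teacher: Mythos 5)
Your proposal is correct and follows essentially the same route as the paper: both reduce to the loop-and-coloop-free case via Lemma~\ref{lemma:cotangentloopsandcoloops} and the join formula, recover the non-basis independent sets from Proposition~\ref{prop:rkanddimensionsofmatroidsandccm}\,(ii) applied to links, and ultimately recover the bases through Lemma~\ref{prop:reconstructrankonematroids}. The only difference is organizational: you run a strong induction on the rank and recover each basis $B$ through the link of an arbitrary nonempty proper subset, whereas the paper skips the induction and applies the rank-one lemma directly to the links of corank-one independent sets; both are valid.
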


\begin{proof}
  The first part is given by~\Cref{cor:trivialcotangentcohom.}. So let us assume that $\M$ is not discrete.
Using \Cref{lemma:cotangentloopsandcoloops} we can recover  all loops and coloops of $\M $. 
We can then write  $\M $ as a join of a loop and coloop free matroid $\M '$ and a discrete matroid $U$.  By \cite[Corollary 2.4]{altmann2016rigidity} we can split the first cotangent cohomology module into a direct sum:
\[
  \firstcotangentcohomology{}{\M } =  \big(\firstcotangentcohomology{}{\M '} \otimes \K[U]\big) \oplus \big(\K[\M '] \otimes \firstcotangentcohomology{}{U}\big).
  \]
Since $\M $ is not discrete we obtain from \Cref{cor:trivialcotangentcohom.}  that $\firstcotangentcohomology{}{\M '} \neq 0$ and $\firstcotangentcohomology{}{U} = 0$.
Therefore, we can use any nontrivial degree of $\M '$ to reconstruct whether an element in the ground set of $U$ is a loop or a coloop.
So it suffices to prove the theorem when $\M$ is a loop-and-coloop-free matroid on $[n]$.
Furthermore, by \Cref{prop:reconstructrankonematroids} we may assume that $\rk{}{\M}>1$.
Let $\bff \in \N^n$ with $F = \supp(\bff)\in\M$ and denote by $\Lambda= \lk_\M F$.
From  $\lk_{\Lambda} G = \lk_\M(F\cup G)$ and 
 (\ref{eq:TiOfLink}) we obtain:
\begin{equation}
  \label{eq:t1oflink}
  \firstcotangentcohomology{}{\Lambda} = 
    \bigoplus_{\substack{\bfc\in\Z^{n}\\ F\,\cap\,\supp\bfc\,=\,\emptyset}} \firstcotangentcohomology{\bfc+\bff}{\M}.  
\end{equation}
Combining (\ref{eq:t1oflink}) with \Cref{lem:L2AC} and \Cref{prop:rkanddimensionsofmatroidsandccm}~\ref{item:rank2}
we have that $\firstcotangentcohomology{}{\Lambda}=0$ if and only if $F$ is a nonface or a basis. We can thus recover all independent sets which are not a basis as:
  \[
    F\in \M\sdif\matroid{B}{\M}    \iff \exists~\bfa,\bfb\in\N^n\text{~with~}F\subseteq \supp\bfa \text{~such that~}\firstcotangentcohomology{\bfa-\bfb}{\M}\neq 0.
  \]
 So we only need to recover the bases. To this aim   it is enough to identify for every $F\in\M$ of rank $\rk{}{\M}-1$ the bases which contain it. This is equivalent to recovering all faces of $\lk_\M F$.   As $\lk_\M F$ has rank one we may apply~\Cref{prop:reconstructrankonematroids} to recover all its faces from $\firstcotangentcohomology{}{\lk_\M F}$. We then conclude by applying (\ref{eq:t1oflink}) again. 
\end{proof}
In summary, the graded first cotangent cohomology module of a matroidal Stanley-Reisner ring can tell us if a matroid is discrete or not. 
If the matroid is discrete, we cannot obtain any more information.
If the matroid is not discrete, we can uniquely determine it.

\bibliographystyle{alpha}
\bibliography{references}

\begin{thebibliography}{ABHL16}

\bibitem[ABHL16]{altmann2016rigidity}
Klaus Altmann, Mina Bigdeli, J{\"u}rgen Herzog, and Dancheng Lu.
\newblock Algebraically rigid simplicial complexes and graphs.
\newblock {\em J. Pure Appl. Algebra}, 220(8):2914--2935, 2016.

\bibitem[AC04]{altmann2000stanleyreisner}
Klaus Altmann and Jan~Arthur Christophersen.
\newblock Cotangent cohomology of {S}tanley-{R}eisner rings.
\newblock {\em Manuscripta Math.}, 115(3):361--378, 2004.

\bibitem[And74]{And74}
Michel Andr{\'e}.
\newblock {\em Homologie des algebres commutatives}, volume 206.
\newblock Springer, 1974.

\bibitem[Har09]{HAr09}
Robin Hartshorne.
\newblock {\em Deformation theory}, volume 257.
\newblock Springer Science \& Business Media, 2009.

\bibitem[Lod13]{Lod13}
Jean-Louis Loday.
\newblock {\em Cyclic homology}, volume 301.
\newblock Springer Science \& Business Media, 2013.

\bibitem[LS67]{LS67}
Stephen Lichtenbaum and Michael Schlessinger.
\newblock The cotangent complex of a morphism.
\newblock {\em Transactions of the American Mathematical Society},
  128(1):41--70, 1967.

\bibitem[MT11]{MT11}
Nguyen~Cong Minh and Ngo~Viet Trung.
\newblock Cohen--macaulayness of monomial ideals and symbolic powers of
  stanley--reisner ideals.
\newblock {\em Advances in mathematics}, 226(2):1285--1306, 2011.

\bibitem[Oxl11]{OXLEY}
James Oxley.
\newblock {\em Matroid Theory, Second Edition}.
\newblock Oxford University Press, 2011.

\bibitem[Ser07]{Ser07}
Edoardo Sernesi.
\newblock {\em Deformations of algebraic schemes}, volume 334.
\newblock Springer Science \& Business Media, 2007.

\bibitem[Var11]{Var11}
Matteo Varbaro.
\newblock Symbolic powers and matroids.
\newblock {\em Proceedings of the American Mathematical Society}, pages
  2357--2366, 2011.

\end{thebibliography}

\end{document}